\newtheorem{thm}{Theorem}[section]
\newtheorem{cor}[thm]{Corollary}
\newtheorem{prop}[thm]{Proposition}
\theoremstyle{definition}
\newtheorem{defn}[thm]{Definition}
\theoremstyle{remark}
\newtheorem{rem}[thm]{Remark}
\theoremstyle{definition}
\theoremstyle{definition}
\theoremstyle{remark}
\numberwithin{equation}{section}
\newcommand{\qqq}{\mathfrak{q}}
\newcommand{\Tor}{\mathrm{Tor}}
\newcommand{\Ext}{\mathrm{Ext}}
\newcommand{\coker}{\mathrm{coker}}
\newcommand{\im}{\mathrm{im}}
\newcommand{\HHom}{\mathrm{Hom}}
\newcommand{\HH}{\mathrm{H}}
\newcommand{\Hochs}{\mathrm{HH}}
\newcommand{\eqdef}{\mathrel{\mathop:}=}
\newcommand{\Symm}{\mathrm{S}}
\newcommand{\Ex}{\mathrm{Ex}}
\newcommand{\Exal}{\mathrm{Exalcom}}
\newcommand{\pdim}{\mathrm{pd}}
\newcommand{\idim}{\mathrm{id}}
\newcommand{\xMapsto}[2][]{\ext@arrow 0599{\Mapstofill@}{#1}{#2}}
\def\Mapstofill@{\arrowfill@{\Mapstochar\Relbar}\Relbar\Rightarrow}
\def\@tocline#1#2#3#4#5#6#7{\relax
	\ifnum #1>\c@tocdepth 
	\else
	\par \addpenalty\@secpenalty\addvspace{#2}%
	\begingroup \hyphenpenalty\@M
	\@ifempty{#4}{%
		\@tempdima\csname r@tocindent\number#1\endcsname\relax
	}{%
		\@tempdima#4\relax
	}%
	\parindent\z@ \leftskip#3\relax \advance\leftskip\@tempdima\relax
	\rightskip\@pnumwidth plus4em \parfillskip-\@pnumwidth
	#5\leavevmode\hskip-\@tempdima
	\ifcase #1
	\or\or \hskip 1em \or \hskip 2em \else \hskip 3em \fi%
	#6\nobreak\relax
	\hfill\hbox to\@pnumwidth{\@tocpagenum{#7}}\par
	\nobreak
	\endgroup
	\fi}
\begin{document}

\title[]{A ring of cohomological operators on Ext and Tor}%
\author{Samuel Alvite and Javier Majadas}%
\address{Departamento de Matem\'aticas, Facultad de Matem\'aticas, Universidad de Santiago de Compostela, E15782 Santiago de Compostela, Spain}%
\email{samuel.alvite.pazo@usc.es, j.majadas@usc.es}%

\thanks{$^{(\star)}$ This work was partially supported by Agencia Estatal de Investigaci\'on (Spain), grant PID2020-
115155GB-I00 (European FEDER support included, UE) and by Xunta de Galicia through the Competitive Reference Groups (GRC) ED431C 2023/31. Samuel Alvite was also financially supported by Xunta de Galicia Scholarship ED481A-2023-032.}

\keywords{Gulliksen operators, Andr\'e-Quillen cohomology}%
\thanks{2020 {\em Mathematics Subject Classification.} Primary: 13D03, 13D07. Secondary: 13D10.}

\begin{abstract}
  Let $f \colon R \to B$ be a surjective homomorphism of rings with kernel $I$. Gulliksen (when $I$ is generated by a regular sequence) and later Mehta (in general) showed that for any $B$-modules $M$ and $N$, $\Ext_B^{\ast}(M,N)$ has a structure of graded $\Symm_B^{\ast}\left(\widehat{I/I^2}\right)$-module, where $\widehat{\mkern9mu}$ denotes dual and $\Symm$ denotes symmetric algebra. This construction is extended to the case where $f$ is not necessarily surjective in a way that allows one to regard these operators from a more natural perspective.
\end{abstract}

\maketitle
\setcounter{secnumdepth}{1}
\setcounter{section}{-1}
\tableofcontents

\section{Introduction}

Unless otherwise stated, all rings in this paper will be commutative (in the graded case sometimes anticommutative).

Let $R \to B$ be a surjective ring homomorphism with kernel $I$ generated by a regular sequence of length $n$ and $M, N$ a pair of $B$-modules. Gulliksen \cite{Gulliksen} showed that $\Ext_B^{\ast}(M,N)$ has a structure of graded $B[X_1, \dots, X_n]$-module, where $B[X_1, \dots, X_n]$ is the graded polynomial ring over $B$ on variables $X_1, \dots, X_n$ of degree 2. He also showed that if $R$ is noetherian and $M, N$ $B$-modules of finite type such that $\pdim_R(M) < \infty$ or $\idim_R(N) < \infty$, then $\Ext_B^{\ast}(M,N)$ is a $B[X_1, \dots, X_n]$-module of finite type.

Two years later, Mehta \cite{Mehta} gave another construction for these operators which allows them to be defined for an arbitrary surjective ring homomorphism $R \to B$, showing that $\Ext_B^{\ast}(M,N)$ has a structure of graded $\Symm_B^{\ast}\left(\widehat{I/I^2}\right)$-module, where $I= \ker(R \to B)$, $\widehat{I/I^2} \eqdef \HHom_B(I/I^2,B)=\Ext_R^1(B,B)$, and $\Symm^{\ast}_B$ denotes the symmetric algebra. When $I$ is generated by a regular sequence, $\Symm_B^{\ast}\left(\widehat{I/I^2}\right) = B[X_1, \dots, X_n]$ and the structure agrees with the one given by Gulliksen. Subsequent constructions were given by Eisenbud \cite{Eisenbud}, Avramov \cite{Avramov}, Avramov-Sun \cite{AvramovSun}, etc. Fifty years after Gulliksen's construction, these operators continue being extensively used.

If $R$ is an algebra over a field $k$, when $I$ is generated by a regular sequence, Snashall and Solberg \cite[Section 7]{SnaSol} show that the structure of $\Symm_B^{\ast}\left(\widehat{I/I^2}\right)$-module factorizes by the canonical action of Hochschild cohomology $\Hochs^{\ast}(B | k )$ on \break $\Ext_B^{\ast}(M,N)$. Note that, under these hypotheses, if $\HH^{\ast}(R,B,W)$ denotes Andr\'e-Quillen cohomology \cite{An1974} \cite{Quillen}, $\widehat{I/I^2} \cong \HH^1(R,B,B)$ and, analyzing the proof given in \cite{SnaSol}, one can see that in this case the action of $\Symm_B^{\ast}\left(\widehat{I/I^2}\right)$ factorizes through $\Symm_B^{\ast}\left(\HH^1(k,B,B)\right)$, via the canonical homomorphisms $\HH^1(R,B,B) \to \HH^1(k,B,B)$ and $\HH^1(k,B,B) \to \Hochs^2(B | k)$ defined in \cite{Quillen}.

In this paper we first extend Mehta's construction to the case of a not necessarily surjective ring homomorphism $A \to B$ and show that (at least when $A$ is a field) it also agrees, in this general setting, with the operations of $\Symm_B^{\ast}\left(\HH^1(A,B,B)\right)$ given by the canonical homomorphism $\HH^1(A,B,B) \to \Hochs^2(B | A)$.

However, we think that the main interest of extending these operations to the non-surjective case is not the greater generality, but rather the fact that the construction is more natural and that even in the surjective case, $R \to B$, we may factorize the operations by the ones corresponding to some homomorphism $A \to B$ where $R$ is an $A$-algebra. We will make this precise below.

For any ring homomorphism $A \to B$ and $B$-modules $M$ and $N$, we construct an homomorphism of $B$-modules with central image (Definition \ref{homPsi}, Proposition \ref{ProofPsi}, Proposition \ref{central})
\[ \psi \colon \HH^1(A,B,B) \to \Ext_B^2(M,M) \]
giving a structure (via the Yoneda product by $\psi(x)$) of $\Symm_B^{\ast}(\HH^1(A,B,B))$-algebra on $\Ext_B^{\ast}(M,N)$.

We show (Theorem \ref{commPhiPsi}) that when $R$ is an $A$-algebra and $B=R/I$, this map makes the following diagram commutative
\begin{center}
	\begin{tikzcd}[scale cd=0.93, sep=small]
		\mathrm{Der}_A(R,M) \arrow{r} & \HH^1(R,B,B)=\widehat{I/I^2} \arrow{rr} \arrow[swap]{rd}{\varphi} & & \HH^1(A,B,B) \arrow{r} \arrow{ld}{\psi} & \HH^1(A,R,B)
		\\ & & \Ext_B^2(M,M) & &  	
	\end{tikzcd}
\end{center}
where the upper row is the Jacobi-Zariski exact sequence associated to $A \to R \to B$ \cite[5.1]{An1974}, and where $\varphi$ is the homomorphism defining Mehta's structure. Thus, in particular, when $A \to B$ is surjective, taking $R=A$ we see that our definition recovers that of Mehta's. In general, our definition is very similar to his and, in fact, even easier for some purposes, since both definitions involve a connecting homomorphism, but in the case of $\psi$ it is an isomorphism.

We also prove (Theorem \ref{commSnashallSolberg}) that (when $A$ is a field) our construction agrees with the known action through Hochschild cohomology. This extends the case proved in \cite{SnaSol}, of a surjective homomorphism with kernel generated by a regular sequence, to the case of an arbitrary ring homomorphism, but it also provides a computation of the action given from Andr\'e-Quillen cohomology through Hochschild cohomology, obtaining as a result an easy description of this action: it agrees with our Definition \ref{homPsi}. The simplicity of this definition is one of the reasons why we believe that $\HH^1(A,B,B)$ can naturally be regarded as the source of Gulliksen operators.

Other advantages of considering the operators defined by an arbitrary homomorphism $A \to B$ are the following. On the one hand, when $R \to B$ is surjective with kernel $I$ generated by a regular sequence, Gulliksen showed the aforementioned finiteness result. This result translates (Theorem \ref{finiteGeneration}) to the arbitrary case $A \to B$ in a natural way: the hypotheses are now that the homomorphism $A \to B$ is complete intersection.

On the other hand, even when we are interested in the case $B=R/I$ with $I$ generated by a regular sequence, the action of $B[X_1, \dots, X_n]$ on $\Ext_B^{\ast}(M,N)$ can be far from being faithful. An obvious example is when $B$ is the residue field of a regular local ring $R$. Thus, it is useful to factorize the ring of cohomological operators $\Symm_B^{\ast}\left(\widehat{I/I^2}\right)$ by its quotient by the annihilator of $\Ext_B^{\ast}(M,N)$. This leads to the study of support varieties or schemes (see e.g. \cite{AvramovBuchweitz}). If we can choose a ring $A$ so that $R$ is an $A$-algebra and $\HH^1(A,R,B)=0$, the diagram above shows that $\Symm_B^{\ast}(\HH^1(A,B,B))$ is a quotient of $\Symm_B^{\ast}\left(\widehat{I/I^2}\right)$, avoiding some of the trivial action of $\Symm_B^{\ast}\left(\widehat{I/I^2}\right)$ on $\Ext_B^{\ast}(M,N)$ (note, however, that given $R$ and $I$, we cannot always expect to find such a ring $A$ so that the action of $\Symm_B^{\ast}(\HH^1(A,B,B))$ be faithful (Remark \ref{h2})). The same diagram also reveals part of that trivial action: if $\hat{x} \in  \widehat{I/I^2} = \HHom_R(I,B)$ is the restriction of a derivation $D \colon R \to B$, then $\varphi(\hat{x}) = \psi(0)=0$.

\section{Review on extensions of modules and algebras}

We review some basic facts about extensions of modules and algebras that we will use. Details can be seen in \cite[$0_{\rm IV}$, \S18, \S20]{EGAIV1} and \cite{BoA10} (\cite{MacLane} can also be useful).

\begin{prop} \label{Prop1.1}
	Let $B$ be a (commutative) ring, $\Ex$ a functor from the category of $B$-modules to the category of sets satisfying:
	\begin{enumerate}
		\item[(i)] For any pair of $B$-modules $M_1$, $M_2$, there exists a natural isomorphism
		\[ \tau \colon \Ex(M_1) \times \Ex(M_2) \to  \Ex(M_1 \oplus M_2)\]
		such that, if $p_i \colon M_1 \oplus M_2 \to M_i$ is the canonical projection, then  \break $\Ex(p_i) \circ \tau \colon \Ex(M_1) \times \Ex(M_2) \to  \Ex(M_i)$ is the canonical projection.
		\item[(ii)] $\Ex(0)$ is a singleton and the composition homomorphism
		\[ \Ex(M) \xrightarrow{\cong} \Ex(M) \times \Ex(0) \xrightarrow{\tau} \Ex(M) \]
		is the identity map.
	\end{enumerate}
	Then, for any $B$-module $M$, $\Ex(M)$ has a $B$-module structure.
\end{prop}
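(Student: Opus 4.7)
The plan is to transfer the $B$-module structure from $M$ onto $\Ex(M)$ via the functor $\Ex$, using the natural isomorphism $\tau$ together with the canonical $B$-module maps on $M$. Addition on $\Ex(M)$ is defined by
\[ + \colon \Ex(M) \times \Ex(M) \xrightarrow{\tau} \Ex(M \oplus M) \xrightarrow{\Ex(\sigma)} \Ex(M), \]
where $\sigma \colon M \oplus M \to M$ is the sum map $(m_1,m_2) \mapsto m_1+m_2$. Scalar multiplication by $b \in B$ is $\Ex(\mu_b)$, where $\mu_b \colon M \to M$ is multiplication by $b$. The zero element is the image of the unique point of $\Ex(0)$ under $\Ex(0 \to M)$ (equivalently, by (ii), $\Ex(\mu_0)(e)$ for any $e \in \Ex(M)$).

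The core of the work is verifying the axioms of a $B$-module. For commutativity of $+$, apply $\Ex$ to the swap $s \colon M \oplus M \to M \oplus M$, $(m_1,m_2) \mapsto (m_2,m_1)$; since $p_i \circ s = p_{3-i}$, hypothesis (i) forces $\Ex(s) \circ \tau = \tau \circ t$, where $t$ swaps the factors in $\Ex(M) \times \Ex(M)$, and $\sigma \circ s = \sigma$ gives the result. For associativity, use (i) iteratively to identify $\Ex(M \oplus M \oplus M)$ with $\Ex(M)^3$ and apply $\Ex$ to the identity $\sigma \circ (\sigma \oplus \mathrm{id}) = \sigma \circ (\mathrm{id} \oplus \sigma)$. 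That the zero element is neutral is exactly condition (ii) after identifying $\Ex(0 \to M)$ with the embedding $\Ex(0) \hookrightarrow \Ex(M) \times \Ex(0)$. Distributivity and the $B$-action axioms follow by applying $\Ex$ to the corresponding identities on $M$ (e.g.\ $\mu_b \circ \sigma = \sigma \circ (\mu_b \oplus \mu_b)$, $\mu_{bb'} = \mu_b \circ \mu_{b'}$, $\mu_1 = \mathrm{id}_M$), again using the naturality of $\tau$.

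The subtlest axiom is the existence of additive inverses, and this is where I expect the main obstacle to lie, since $-1$ is not automatically visible from the product structure on $\Ex$. The key observation is that the composite
\[ M \xrightarrow{\Delta} M \oplus M \xrightarrow{\mathrm{id} \oplus \mu_{-1}} M \oplus M \xrightarrow{\sigma} M \]
equals the zero map and hence factors through $0$. Applying $\Ex$ and using (ii) together with the already-established identification $\Ex(\Delta)(e) = \tau(e,e)$ (which follows from (i), since $p_i \circ \Delta = \mathrm{id}$), one deduces $e + \mu_{-1}(e) = 0$ in $\Ex(M)$, providing the inverse. Once these identities are in place, the naturality of every construction in $M$ ensures that $\Ex$ lifts to a functor from $B$-modules to $B$-modules, completing the proof.
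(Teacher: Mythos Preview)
Your proposal is correct and follows essentially the same approach as the paper: transport the $B$-module structure maps of $M$ (zero, addition, negation, scalar multiplication) to $\Ex(M)$ via the functor $\Ex$ and the natural isomorphism $\tau$, and verify the axioms by applying $\Ex$ to the corresponding commutative diagrams of $B$-module maps. The paper's proof is terser---it simply lists $-\colon M\to M$, $m\mapsto -m$, among the structure maps alongside $+$ and $(b\cdot)$ and asserts that applying $\Ex$ to the relevant diagrams gives the result---so your worry that additive inverses are ``the subtlest axiom'' is a bit overstated, but your verification via $\sigma\circ(\mathrm{id}\oplus\mu_{-1})\circ\Delta=0$ is perfectly valid.
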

\begin{proof}
	Since $M$ is a $B$-module, one has $B$-module maps:
	\begin{enumerate}
		\item[(a)] $0 \to M$
		\item[(b)] $+ \colon M \times M \to M$; $(m,n) \mapsto m+n$
		\item[(c)] $- \colon M \to M $; $m \mapsto -m$
		\item[(d)] For every $b \in B$, $(b\cdot ) \colon M \to M$; $m \mapsto bm$
	\end{enumerate}
	along with certain commutative diagrams involving these maps, representing associativity, commutativity, etc.
	
	From the conditions (i) and (ii), applying $\Ex$ to these morphisms and their associated diagrams, one obtains the analogues for $\Ex(M)$, which is the desired result.
\end{proof}

\begin{defn} \label{defExModulos}
	Let $B$ be a ring, $n \geq 1$ an integer. An exact sequence of $B$-modules
	\[ \xi = \left( 0 \to N \to T_{n-1} \to \cdots \to T_0 \to M \to 0 \right)\]
	will be called an \emph{$n$-extension} of the module $M$ by the module $N$. If
	\[ \xi' = \left( 0 \to N \to T'_{n-1} \to \cdots \to T'_0 \to M \to 0 \right)\]
	is another $n$-extension of $M$ by $N$,  $\xi'$ is said to be \emph{directly related} to $\xi$ if there exists a commutative diagram of $B$-module maps
	\begin{center}
		\begin{tikzcd}
			0 \arrow{r} & N \arrow{r} \arrow[equal]{d} & T_{n-1} \arrow{r} \arrow{d}& \cdots \arrow{r} & T_0 \arrow{r} \arrow{d} & M \arrow{r} \arrow[equal]{d} & 0
			\\ 0 \arrow{r} & N \arrow{r} & T'_{n-1} \arrow{r} & \cdots \arrow{r} & T'_0 \arrow{r} & M \arrow{r} & 0
		\end{tikzcd}
	\end{center}
	
	Two extensions are said to be \emph{equivalent} if they are in the same equivalence class with respect to the the equivalence relation generated by being directly related (we shall not concern ourselves with any arising set-theoretic questions in this paper). The equivalence class of an extension $\xi$ will be denoted by $[\xi]$, while the set of all equivalence classes of $n$-extensions of $M$ by $N$ will be denoted by $\Ex_B^n(M,N)$.
\end{defn}

\begin{defn}
	Let $B$ be a ring,
	\[ \xi = \left( 0 \to N \to T_{n-1} \to \cdots \to T_0 \to M \to 0 \right)\]
	an $n$-extension of $M$ by $N$, $\alpha \colon N \to N'$ and $\beta \colon M' \to M$ $B$-module maps. We have commutative diagrams of exact sequences
	\begin{center}
		\begin{tikzcd}[column sep=0.81cm]
			0 \arrow{r} & N \arrow{r} \arrow{d}{\alpha} & T_{n-1} \arrow{r} \arrow{d} &  T_{n-2} \arrow{r} \arrow[equal]{d} & \cdots \arrow{r} & T_0 \arrow{r} \arrow[equal]{d} & M \arrow{r} \arrow[equal]{d} & 0
			\\ 0 \arrow{r} & N' \arrow{r} & N' \oplus_N T_{n-1} \arrow{r} & T_{n-2} \arrow{r} &  \cdots \arrow{r} & T_0 \arrow{r} & M \arrow{r} & 0
			
			\arrow[from=2-3, to=1-2, "\blacksquare"{anchor=center, pos=0.125, rotate=180}, draw=none]
		\end{tikzcd}
	\end{center}
	and
	\begin{center}
		\begin{tikzcd}
			0 \arrow{r} & N \arrow{r} \arrow[equal]{d} & T_{n-1} \arrow{r} \arrow[equal]{d} & \cdots \arrow{r} & T_1 \arrow{r} \arrow[equal]{d} & T_0 \times_M M' \arrow{r} \arrow{d} & M' \arrow{r} \arrow{d}{\beta} & 0
			\\ 0 \arrow{r} & N \arrow{r} & T_{n-1} \arrow{r} & \cdots \arrow{r} & T_1 \arrow{r} & T_0 \arrow{r} & M \arrow{r} & 0
			
			\arrow[from=1-6, to=2-7, "\blacksquare"{anchor=center, pos=0.125, rotate=90}, draw=none]
		\end{tikzcd}
	\end{center}
	
	The bottom and upper row, respectively, of these diagrams are $n$-extensions that will be denoted by $\alpha_{\ast}(\xi)$ and $\beta^{\ast}(\xi)$. Clearly, both $\alpha_{\ast}$ and $\beta^{\ast}$ preserve the equivalence relation of Definition \ref{defExModulos}.
\end{defn}

\begin{prop}
	Let $B$ be a ring and $M$ a $B$-module. With the above definition of $\alpha_{\ast}$, $\Ex_B^n(M, -)$ is a functor verifying the conditions stated in Proposition \ref{Prop1.1} and thus, if $B \to C$ is a ring homomorphism, $\Ex_B^n(M, N)$ is a $C$-module for any \break $C$-module $N$.
\end{prop}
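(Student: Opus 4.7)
The plan is to verify conditions (i) and (ii) of Proposition \ref{Prop1.1} for the functor $\Ex_B^n(M,-)$; the $C$-module structure then follows by restriction of scalars. First I would check functoriality: the universal property of the pushout $N' \oplus_N T_{n-1}$ appearing in the construction of $\alpha_{\ast}$ yields canonical direct relations $(\mathrm{id})_{\ast}(\xi) \sim \xi$ and $(\alpha' \alpha)_{\ast}(\xi) \sim \alpha'_{\ast}\alpha_{\ast}(\xi)$, so $\alpha_{\ast}$ descends to equivalence classes functorially.

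For condition (i), I would define
\[ \tau\bigl([\xi_1], [\xi_2]\bigr) \eqdef \bigl[\Delta^{\ast}(\xi_1 \oplus \xi_2)\bigr] \in \Ex_B^n(M, N_1 \oplus N_2), \]
where $\xi_1 \oplus \xi_2$ denotes the termwise direct sum (an $n$-extension of $M \oplus M$ by $N_1 \oplus N_2$) and $\Delta \colon M \to M \oplus M$ is the diagonal. Naturality in both arguments follows from the compatibility of $\alpha_{\ast}$ with direct sums. The projection identity $(p_i)_{\ast}\tau\bigl([\xi_1], [\xi_2]\bigr) = [\xi_i]$ reduces, via the observation that $\alpha_{\ast}$ and $\beta^{\ast}$ act on disjoint terms of the extension and hence commute, to a diagram chase: termwise projections $T^{(1)}_j \oplus T^{(2)}_j \to T^{(i)}_j$ in the middle (with the pushout at the $N$-end and the pullback at the $M$-end handled explicitly) produce the required direct relation. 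Bijectivity of $\tau$ then follows from the candidate inverse $\eta \mapsto \bigl([(p_1)_{\ast}\eta], [(p_2)_{\ast}\eta]\bigr)$, whose composition with $\tau$ in the other order is the identity by a symmetric argument.

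For condition (ii), I would show that $\Ex_B^n(M, 0)$ is a singleton by lifting any extension $0 \to 0 \to T_{n-1} \to \cdots \to T_0 \to M \to 0$ along a projective resolution of $M$, producing a zigzag of direct relations to the trivial extension $\mathbf{0} = (0 \to 0 \to 0 \to \cdots \to 0 \to M \to M \to 0)$ (equivalently, one may invoke the Yoneda identification $\Ex_B^n(M,0) \cong \Ext_B^n(M,0) = 0$). The identity condition $\tau([\xi], \ast) = [\xi]$ is then immediate from a direct inspection: $\Delta^{\ast}(\xi \oplus \mathbf{0})$ is canonically isomorphic to $\xi$. Finally, if $N$ is a $C$-module, its structural maps $+$, $-$, $0 \to N$ and multiplication by $c \in C$ are $B$-linear by restriction of scalars, so applying $\Ex_B^n(M,-)$ to them and invoking Proposition \ref{Prop1.1} endows $\Ex_B^n(M,N)$ with a $C$-module structure. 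I expect the main obstacle to be the projection identity in (i): once compatible representatives of the various pushouts and pullbacks are chosen the check is routine, but accommodating the equivalence relation rather than strict equality requires careful bookkeeping.
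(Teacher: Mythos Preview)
Your approach is essentially the same as the paper's: you define $\tau$ by exactly the same formula $\tau([\xi_1],[\xi_2]) = [\Delta^{\ast}(\xi_1 \oplus \xi_2)]$, and you exhibit the same trivial extension $(0 \to 0 \to 0 \to \cdots \to 0 \to M \xrightarrow{\mathrm{id}} M \to 0)$ as representative of the single class in $\Ex_B^n(M,0)$. The paper's proof is in fact quite terse and leaves the verification of the projection identity, the bijectivity of $\tau$, and the normalization condition to the reader; you are simply filling in those details. One small caution: your parenthetical suggestion to invoke the identification $\Ex_B^n(M,0) \cong \Ext_B^n(M,0)$ would be a forward reference, since in the paper that isomorphism (Proposition \ref{Extex}) is established only afterwards; your direct argument via lifting along a projective resolution is the safe route, and is in any case essentially the content of that later proposition specialized to $N=0$.
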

\begin{proof}
	Given the $n$-extensions
	\[ \xi_1 = \left( 0 \to N_1 \to T_{n-1}^1 \to \cdots \to T_0^1 \to M \to 0 \right)\]
	\[ \xi_2 = \left( 0 \to N_2 \to T_{n-1}^2 \to \cdots \to T_0^2 \to M \to 0 \right),\]
	set $ \tau \colon \Ex_B^n(M, N_1) \times \Ex_B^n(M, N_2) \to \Ex_B^n(M, N_1 \oplus N_2)$ by
	\[ \tau([\xi_1],[\xi_2]) = \Delta^{\ast}(\xi_1 \oplus \xi_2)\]
	where
	\[ \xi_1 \oplus \xi_2 = \left( 0 \to N_1 \oplus N_2 \to T_{n-1}^1 \oplus T_{n-1}^2 \to \cdots \to T_0^1 \oplus T_0^2 \to M \oplus M \to 0 \right)\]
	and $\Delta \colon M \to M \oplus M$ is the diagonal homomorphism.
	
	One checks easily that $\Ex_B^n(M, 0)$ has a single element, which can be represented, for instance, by the extension
	\[ \left( 0 \to 0 \to 0 \to \cdots \to 0 \to M \xrightarrow{id} M \to 0 \right).\]
\end{proof}

\begin{rem}
	From the previous definitions, it is easy to decode what the operations are in $\Ex_B^n(M,N)$. For instance, the sum of the extensions
	\[ \xi = \left( 0 \to N \to T_{n-1} \to \cdots \to T_0 \to M \to 0 \right)\]
	\[ \xi' = \left( 0 \to N \to T'_{n-1} \to \cdots \to T'_0 \to M \to 0 \right)\]
	is given by the composition map $\Ex_B^n(M, N) \times \Ex_B^n(M, N) \xrightarrow{\tau} \Ex_B^n(M, N \oplus N) \xrightarrow{(+)_{\ast}} \Ex_B^n(M,N)$ and so $[\xi] + [\xi'] = \left[ (+)_{\ast} (\Delta^{\ast}(\xi \oplus \xi'))\right]$, where $+\colon N \oplus N \to N$, $(n,n') \mapsto n+n'$, and $\Delta \colon M \to M \oplus M$, $m \mapsto (m,m)$. This description is called the Baer sum, defined for instance in \cite[$\S$7, n.5, Remarque 2]{BoA10}. We also have $\left[ (+)_{\ast} (\Delta^{\ast}(\xi \oplus \xi'))\right]= \left[ \Delta^{\ast}(+)_{\ast} (\xi \oplus \xi')\right]$ by \cite[$\S$7, n.1 Exemple 3, n.4 Proposition 3]{BoA10}.
	
	If $b \in B$ and $(b \cdot) \colon N \to N$ is multiplication by $b$, then
	\[ b [\xi] = [(b \cdot)_{\ast}(\xi)]\]
	
	If $\alpha \colon N \to N'$ and $\beta \colon M' \to M$ are $B$-module homomorphisms, then so are $\alpha_{\ast} \colon \Ex_B^{n}(M,N) \to \Ex_B^{n}(M,N')$ and $\beta^{\ast} \colon \Ex_B^{n}(M,N) \to \Ex_B^{n}(M',N)$. A reference for this is \cite[\S7, n.4, Corollaire 3]{BoA10}, but a direct proof of this fact is easy, taking into account that the compositions $N \oplus N \xrightarrow{+} N \xrightarrow{\alpha} N'$ and $ N \oplus N \xrightarrow{(\alpha, \alpha)} N' \oplus N' \xrightarrow{+} N'$ agree (for the case $n=1$, also use the commutativity (naturality) of pullbacks and pushouts \cite[$\S$7, n.1 Exemple 3, n.4 Proposition 3]{BoA10} or \cite[Ch. 3, \S1, Lemma 1.6]{MacLane}).
\end{rem}

\begin{defn}
	Let $B$ be a ring, $M$, $N$ and $W$ $B$-modules, and
	\[ \xi_1 = \left( 0 \to N \to T_{n-1} \to \cdots \to T_0 \xrightarrow{\varepsilon} W \to 0 \right)\]
	\[ \xi_2 = \left( 0 \to W \xrightarrow{i} U_{m-1} \to \cdots \to U_0 \to M \to 0 \right)\]
	extensions. The Yoneda product is defined as
	\[ \xi_1 \circ \xi_2 = \left( 0 \to N \to T_{n-1} \to \cdots \to T_0 \xrightarrow{i  \varepsilon}  U_{m-1} \to \cdots \to U_0 \to M \to 0 \right)\]
	
	One checks immediately that this operation preserves the equivalence relation, and therefore defines a map
	\[ \Ex_B^n(W,N) \times \Ex_B^m(M,W) \to \Ex_B^{n+m}(M,N)\]
\end{defn}

\begin{prop}\label{Extex}
	There exists a natural $B$-module isomorphism
	\[ \Ext_B^n(M,N) \cong \Ex_B^n(M,N)\]
	taking the product defined in \cite[$\S$7, n.1]{BoA10} to the Yoneda product.
\end{prop}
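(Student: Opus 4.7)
The plan is to construct the isomorphism using a projective resolution of $M$, verify that the construction is independent of choices, and then check compatibility with the $B$-module structure and with the two product constructions. Fix a projective resolution $P_\bullet \to M$ with differentials $d_i$, and set $Z_i = \ker(d_i) \subseteq P_i$, so that $Z_{n-1} \cong P_n/\im(d_{n+1})$.

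First I would define a map $\alpha \colon \Ext_B^n(M,N) \to \Ex_B^n(M,N)$ as follows. A class in $\Ext_B^n(M,N) = \HH^n(\HHom_B(P_\bullet,N))$ is represented by a cocycle $f \colon P_n \to N$ with $f \circ d_{n+1} = 0$; this factors through a map $\bar{f} \colon Z_{n-1} \to N$. Push out the short exact sequence $0 \to Z_{n-1} \to P_{n-1} \to \im(d_{n-1}) \to 0$ along $\bar{f}$ to obtain $Q = N \oplus_{Z_{n-1}} P_{n-1}$, and splice to form the $n$-extension
\[ \alpha(f) \colon 0 \to N \to Q \to P_{n-2} \to \cdots \to P_0 \to M \to 0. \]
If $f = g \circ d_{n+1}\ldots$ actually if $f = h \circ d_n$ for some $h \colon P_{n-1} \to N$, then $\bar f$ extends to $P_{n-1}$, the pushout splits, and $\alpha(f)$ is equivalent to the trivial extension; hence $\alpha$ descends to equivalence classes. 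For the inverse $\beta \colon \Ex_B^n(M,N) \to \Ext_B^n(M,N)$, given an extension $\xi$, lift $\mathrm{id}_M$ to a chain map from $P_\bullet$ to $\xi$ using projectivity of the $P_i$; the component $P_n \to N$ is a cocycle whose cohomology class is independent of the lift (two lifts differ by a chain homotopy) and of the representative of $[\xi]$ (directly related extensions yield chain-homotopic lifts by an easy diagram chase).

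Next I would check that $\alpha$ and $\beta$ are mutually inverse. For $\beta \circ \alpha = \mathrm{id}$, the canonical maps $P_i \to P_i$ for $i \leq n-2$, the pushout map $P_{n-1} \to Q$, and $f \colon P_n \to N$ assemble into a chain lift of $\mathrm{id}_M$ realizing $\alpha(f)$, and this lift recovers $f$. For $\alpha \circ \beta = \mathrm{id}$, the lifted chain map $P_\bullet \to \xi$ identifies $\alpha(\beta(\xi))$ with $\xi$ up to a direct relation via the pushout universal property. Compatibility with the $B$-module structure is routine: addition of cocycles corresponds to $(+)_*$ applied to the direct sum of extensions, i.e.\ to the Baer sum, because pushout is functorial in $N$; and scalar multiplication by $b \in B$ on $f$ corresponds to $(b \cdot)_*$ on $\alpha(f)$.

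Finally, for the agreement of products, I would use the standard description of the Bourbaki product: given cocycles $g \colon P_n \to W$ and $f \colon Q_m \to N$ (with $Q_\bullet \to W$ a projective resolution), lift $g$ to a chain map $P_{\bullet + n} \to Q_\bullet$ and compose with $f$ in degree $n+m$. Under $\alpha$, splicing the resulting two extensions $\alpha(g) \in \Ex_B^n(W,N)$ and $\alpha(f) \in \Ex_B^m(M,W)$ amounts to composing these chain maps, which is exactly the Yoneda product definition given above. The main obstacle I anticipate is bookkeeping: tracking the pushout together with the chain-homotopy data through the splicing to confirm well-definedness of $\alpha$ on cohomology classes and equality of the two products on the nose rather than merely up to equivalence. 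Once the pushout viewpoint is set up, however, each verification reduces to an unramified diagram chase, and the remaining statements cited from \cite{BoA10} and \cite{MacLane} handle the naturality in $M$ and $N$.
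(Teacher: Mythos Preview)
Your approach is essentially the same as the paper's: both directions are exactly the pushout-of-a-truncated-resolution construction and the chain-map-lifting construction that the paper sketches (citing Bourbaki). The one substantive point you miss is the sign: the paper inserts a factor $(-1)^{n(n+1)/2}$ in both $\alpha$ and $\beta$. Without it you still get a $B$-module isomorphism, but the compatibility of the Bourbaki composition product on $\Ext$ with the Yoneda splice on $\Ex$ acquires a sign depending on the degrees involved, so the product-preservation statement fails literally. This is precisely the ``bookkeeping'' obstacle you flagged; it is not deep, but it is not optional either. (Minor slip: in your last paragraph the targets are swapped---$\alpha(g)$ should lie in $\Ex_B^n(M,W)$ and $\alpha(f)$ in $\Ex_B^m(W,N)$.)
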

\begin{proof}
	\cite[$\S$7]{BoA10}
	We will only give the idea of the proof, since we will use it frequently in what follows.
	
	Let $[f] \in \Ext_B^n(M,N)=\HH^n(\HHom_B(P_{\ast},N))$, where $P_{\ast}$ is a projective resolution of the $B$-module $M$ and $f \in \HHom_B(P_n, N)$ a cocycle, that is, the composition $P_{n+1} \xrightarrow{d_{n+1}} P_n \xrightarrow{f} N$ vanishes. As such, $f$ induces a map $\bar{f} \colon \ker(P_{n-1} \xrightarrow{d_{n-1}} P_{n-2}) \to N$. One assigns to $[f]$ the extension
	\[ (-1)^{n(n+1)/2} \bar{f}_{\ast}  \left( 0 \to \ker(P_{n-1} \xrightarrow{d_{n-1}} P_{n-2}) \to P_{n-1} \to \cdots \to P_0 \to M \to 0 \right)\]
	
	Conversely, for an extension
	\[ \xi = \left( 0 \to N \to T_{n-1} \to \cdots \to T_0 \to M \to 0 \right)\]
	one takes a projective resolution $P_{\ast}$ of the $B$-module $M$, and in the induced commutative diagram
	\begin{center}
		\begin{tikzcd}
			\cdots \arrow{r} & P_{n+1} \arrow{r} \arrow{d} & P_n \arrow{r} \arrow{d}{g} & P_{n-1} \arrow{r} \arrow{d} & \cdots \arrow{r} & P_0 \arrow{r} \arrow{d} & M \arrow{r} \arrow[equal]{d} & 0
			\\ \phantom{} & 0 \arrow{r} & N \arrow{r} & T_{n-1} \arrow{r} & \cdots \arrow{r} & T_0 \arrow{r} & M \arrow{r} & 0
		\end{tikzcd}
	\end{center}
	$(-1)^{n(n+1)/2} g \in \HHom_B(P_n, N)$ is an $n$-cocyle representing an element $[g]$ in \break $\Ext_B^n(M,N)$.
\end{proof}

\begin{rem} \label{freeExtension}
	From the preceding construction one deduces that every element of $\Ex_B^n(M,N)$ admits a representant of the form
	\[ \left( 0 \to N \to T_{n-1} \to P_{n-2} \cdots \to P_0 \to M \to 0 \right)\]
	where $P_0, \dots, P_{n-2}$ are free $B$-modules.
\end{rem}

\begin{defn}
	Let $A \to B$ be a ring homomorphism and $M$ a $B$-module. An \emph{infinitesimal extension} of $B$ over $A$ by the $B$-module $M$ is an exact sequence of $A$-modules
	\[ 0 \to M \xrightarrow{i} E \xrightarrow{p} B \to 0\]
	where $E$ is an $A$-algebra, $p$ an $A$-algebra homomorphism and $e \cdot i(m) = i(p(e) \cdot m)$ for any $e \in E$, $m \in M$. Note that, therefore, $i(m_1) \cdot i(m_2) = i(p(i(m_1)) \cdot m_2) = i(0 \cdot m_2) = 0$, that is, $i(M)^2=0$.
\end{defn}

Reciprocally:

\begin{rem}
	Let $A$ be a ring, $p \colon E \to B$ a surjective $A$-algebra homomorphism with kernel the square zero ideal $M$. Then
	\[ 0 \to M \xhookrightarrow{} E \xrightarrow{p} B \to 0\]
	is an infinitesimal extension of $B$ by $M$. Indeed, $M = M/M^2$ is a $B$-module and writing $i$ for the inclusion of $M$ in $E$, clearly $e \cdot i(m) = i(p(e) \cdot m)$.
\end{rem}

\begin{defn}
	Two infinitesimal extensions of $B$ over $A$ by $M$
	\[ 0 \to M \xrightarrow{i} E \xrightarrow{p} B \to 0\]
	\[ 0 \to M \xrightarrow{i'} E' \xrightarrow{p'} B \to 0\]
	are said to be \emph{equivalent} if there exists a homomorphism of $A$-algebras $\alpha: E \to E'$ making commutative the diagram
	\begin{center}
		\begin{tikzcd}
			0 \arrow{r} & M \arrow{r}{i} \arrow[equal]{d} & E \arrow{r}{p} \arrow{d}{\alpha} & B \arrow{r} \arrow[equal]{d} & 0
			\\ 0 \arrow{r} & M \arrow{r}{i'} & E' \arrow{r}{p'} & B \arrow{r} & 0
		\end{tikzcd}
	\end{center}

	If this is the case, then $\alpha$ is an isomorphism of $A$-algebras, and thus this defines an equivalence relation. The set of equivalence classes will be denoted by $\Exal_A(B,M)$.
\end{defn}

\begin{prop}
	$\Exal_A(B,M)$ is a contravariant functor in $A$ and in $B$. It is also a covariant functor in $M$ which satisfies the conditions from Proposition \ref{Prop1.1}.
\end{prop}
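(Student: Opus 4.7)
The plan is to establish the three functorialities through standard universal constructions (restriction of scalars, fiber product, and pushout) and then apply Proposition \ref{Prop1.1}. For the contravariance in $A$, given a ring homomorphism $A' \to A$ and an extension $\xi \colon 0 \to M \to E \to B \to 0$, I would view $E$ and $B$ as $A'$-algebras by composition; this assigns to $[\xi]$ an equivalence class of $A'$-algebra extensions. For the contravariance in $B$, given an $A$-algebra homomorphism $\beta \colon B' \to B$, I would form the fiber product $E \times_B B' = \{(e, b') \in E \times B' \mid p(e) = \beta(b')\}$, an $A$-subalgebra of $E \times B'$, so that $0 \to M \to E \times_B B' \to B' \to 0$ is an infinitesimal extension of $B'$ by $M$ with $B'$ acting through $\beta$. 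For the covariance in $M$, given a $B$-module map $\alpha \colon M \to M'$, I would form the pushout $E \oplus_M M' := (E \oplus M')/\langle (i(m), -\alpha(m)) : m \in M \rangle$ equipped with the product $(e, m')(e', n') = (ee', \, p(e)n' + p(e')m')$.

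To verify Proposition \ref{Prop1.1}(i), I would define
\[ \tau([\xi_1], [\xi_2]) = \bigl[\, 0 \to M_1 \oplus M_2 \to E_1 \times_B E_2 \to B \to 0 \,\bigr] \]
using the fiber product over $B$; its kernel is $i_1(M_1) \oplus i_2(M_2)$, which is square-zero because each $i_j(M_j)^2 = 0$ and $i_1(M_1) \cdot i_2(M_2) = 0$ inside $E_1 \times_B E_2$. To check compatibility with the projections $p_j \colon M_1 \oplus M_2 \to M_j$, I would observe that pushing out along $p_j$ collapses the embedded copy of $\ker p_j$ and recovers $E_j$. For the bijectivity of $\tau$, the inverse sends $[\xi]$ (an extension with kernel $M_1 \oplus M_2$) to $([(p_1)_{\ast}\xi], [(p_2)_{\ast}\xi])$; one composite is the identity by the observation just made, and the other rests on the canonical isomorphism $E \cong (p_1)_{\ast}E \times_B (p_2)_{\ast}E$, which follows from a diagram chase using that $i(M_1 \oplus 0) \cap i(0 \oplus M_2) = 0$ inside $E$. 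For condition (ii), any extension of $B$ by the zero module forces $E \cong B$, so $\Exal_A(B, 0)$ is a singleton; the composition of (ii) reduces to $E \times_B B = E$, returning $[\xi]$.

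The main obstacle is the well-definedness of the pushout construction for the covariance in $M$: one must confirm that the proposed multiplication on $E \oplus M'$ descends to the quotient by $\langle (i(m), -\alpha(m)) \rangle$, that the resulting $A$-algebra structure is consistent with $M' \hookrightarrow E \oplus_M M'$ being a square-zero ideal, and that the induced $B$-action on $M'$ matches the original one. All of this rests on the identity $e \cdot i(m) = i(p(e) m)$ available in any infinitesimal extension, together with the $B$-linearity of $\alpha$. Once the three functorialities and the conditions of Proposition \ref{Prop1.1} are verified, the $B$-module structure on $\Exal_A(B, M)$ is an immediate consequence of that proposition.
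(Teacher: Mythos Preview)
Your proposal is correct and follows essentially the same approach as the paper: restriction of scalars for $A$, fiber product for $B$, and pushout with the indicated multiplication for $M$. The only cosmetic difference is that the paper writes $\tau([\xi_1],[\xi_2])$ as $\Delta^{\ast}$ applied to the direct sum extension $0 \to M_1 \oplus M_2 \to E_1 \times E_2 \to B \times B \to 0$, which unwinds to exactly your fiber product $E_1 \times_B E_2$.
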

\begin{proof}
	In $A$ it simply acts as a forgetful functor.
	
	If $f \colon B' \to B$ is an $A$-algebra homomorphism and $\xi = ( 0 \to M \xrightarrow{i} E \xrightarrow{p} B \to 0)$ is an infinitesimal extension, then it is easy to check that
	\[  f^{\ast}(\xi) = \left(0 \to M \to E \times_B B' \to B' \to 0\right) \]
	is an infinitesimal extension.
	
	If $g \colon M \to M'$ is a $B$-module homomorphism, then
	\[ g_{\ast}(\xi) = \left( 0 \to M' \to M' \oplus_M E \to B \to 0 \right)\]
	is also an infinitesimal extension (in which the $A$-algebra structure for $M' \oplus_M E$ is given by $ (m'_1,e_1) \cdot (m'_2,e_2) = (e_1 m'_2 + e_2 m'_1,e_1 e_2)$, considering $M$ as an $E$-module via $p$).
	
	If
	\[ \xi_1 = \left(0 \to M_1 \to E_1 \to B \to 0\right)\]
	\[ \xi_2 = \left(0 \to M_2 \to E_2 \to B \to 0\right)\]
	are two infinitesimal extensions of $B$ over $A$, we define the homomorphism $\tau$ of Proposition \ref{Prop1.1} as
	\[ \tau([\xi_1], [\xi_2]) = \Delta^{\ast}(0 \to M_1 \oplus M_2 \to E_1 \times E_2 \to B \times B \to 0)\]
	where $\Delta \colon B \to B \times B$ is the diagonal homomorphism.
	
	Details are easy to check.
\end{proof}

\begin{cor}
	$\Exal_A(B,M)$ has a $B$-module structure. \qed
\end{cor}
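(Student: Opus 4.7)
The plan is to apply Proposition \ref{Prop1.1} directly to the functor $\mathrm{Ex} = \Exal_A(B,-)$ from the category of $B$-modules to sets. The preceding proposition has already established that $\Exal_A(B,-)$ is a covariant functor in $M$ satisfying hypotheses (i) and (ii) of Proposition \ref{Prop1.1}: the map $\tau$ was explicitly constructed via $\tau([\xi_1],[\xi_2]) = \Delta^{\ast}(0 \to M_1 \oplus M_2 \to E_1 \times E_2 \to B \times B \to 0)$, with $\Delta \colon B \to B \times B$ the diagonal $A$-algebra homomorphism, and the singleton condition on $\Exal_A(B,0)$ holds since any square-zero extension by the zero module collapses to the trivial extension $0 \to 0 \to B \xrightarrow{\mathrm{id}} B \to 0$.

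First I would invoke Proposition \ref{Prop1.1} to obtain, for each $B$-module $M$, a $B$-module structure on $\Exal_A(B,M)$. Then, for the reader's convenience, I would briefly decode the resulting operations in complete analogy with the module-extension case: given two infinitesimal extensions $\xi_1,\xi_2$ of $B$ over $A$ by $M$, their Baer sum is $[\xi_1]+[\xi_2]=[(+)_{\ast}\Delta^{\ast}(\xi_1 \oplus \xi_2)]$ where $+\colon M \oplus M \to M$ is addition and $\Delta \colon B \to B \times B$ is the algebra diagonal, and the action of $b \in B$ is $b[\xi]=[(b\cdot)_{\ast}(\xi)]$, with $(b\cdot)\colon M \to M$ being multiplication by $b$. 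These formulas are forced by the proof of Proposition \ref{Prop1.1}, which applies the functor to the $B$-module structure maps on $M$.

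There is essentially no obstacle, since the work was done in the previous proposition; the only thing worth noting is that functoriality in $M$ ensures the commutative diagrams expressing the $B$-module axioms on $M$ transfer to $\Exal_A(B,M)$ after applying the functor. Hence the corollary is immediate.
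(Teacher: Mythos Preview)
Your proposal is correct and takes essentially the same approach as the paper: the corollary is immediate from the preceding proposition combined with Proposition \ref{Prop1.1}, which is why the paper's proof is simply a \qed. Your added decoding of the Baer sum and scalar action is accurate and parallels the earlier remark for $\Ex_B^n(M,N)$, but it is optional elaboration rather than additional argument.
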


One can easily check that the equivalence class of $0$ is the set of extensions $0 \to M \to E \xrightarrow{p} B \to 0$ admitting an $A$-algebra section $s \colon B \to E$.

\begin{prop} \label{H1Exalcom}
	There is a $B$-module isomorphism, natural in $A$, $B$ and $M$,
	\[ \HH^1(A,B,M) \cong \Exal_A(B,M) \]
	where $\HH^1(A,B,M)$ is the first Andr\'e-Quillen cohomology module.
\end{prop}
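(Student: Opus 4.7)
The plan is to replace $\HH^1(A,B,M)$ by a presentation-based description and then write down explicit inverse maps. Pick a surjection $p \colon R \twoheadrightarrow B$ with $R$ a polynomial $A$-algebra, and set $J = \ker(p)$. Since $\HH^1(A,R,M)=0$ when $R$ is a polynomial $A$-algebra, the Jacobi-Zariski exact sequence for $A \to R \to B$ yields
\[ \HH^1(A,B,M) \cong \coker\bigl(\mathrm{Der}_A(R,M) \to \HHom_B(J/J^2, M)\bigr), \]
and I would use this cokernel as the working model throughout the proof.

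To build the map $\Exal_A(B,M) \to \HH^1(A,B,M)$: given a representative $\xi = (0 \to M \xrightarrow{i} E \xrightarrow{q} B \to 0)$, lift $p$ to an $A$-algebra homomorphism $\tilde p \colon R \to E$, which exists because $R$ is a polynomial $A$-algebra and $q$ is surjective. The restriction $\tilde p|_J$ lands in $i(M)$, and since $i(M)^2 = 0$ in $E$, it factors through $J/J^2$, producing an element $\phi_{\tilde p} \in \HHom_B(J/J^2, M)$. Any other lift of $p$ differs from $\tilde p$ by an $A$-derivation $R \to M$, so the class $[\phi_{\tilde p}]$ in the cokernel is independent of the lift, and by chasing the definition of equivalence it depends only on $[\xi]$.

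For the inverse, given $\phi \colon J/J^2 \to M$, form the $R$-module pushout
\[ E_\phi \eqdef (R \oplus M)/\{(j, -\phi(j)) : j \in J\}, \]
equipped with the multiplication $(r,m)(r',m') = (rr', p(r)m' + p(r')m)$; this descends to $E_\phi$ because $\phi$ vanishes on $J^2$ and because $M^2 = 0$ in the output, and it gives $E_\phi$ the structure of an $A$-algebra. The projection $(r,m) \mapsto p(r)$ then yields an infinitesimal extension $\xi_\phi$ of $B$ by $M$. Moreover, if $\phi = D|_J$ for some $A$-derivation $D \colon R \to M$, the formula $r \mapsto (r, -D(r))$ defines an $A$-algebra section of $E_\phi \to B$, so $[\xi_\phi]=0$; hence the construction descends to a well-defined map $\HH^1(A,B,M) \to \Exal_A(B,M)$.

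What remains is to verify that the two maps are mutually inverse, $B$-linear (Baer sum matches addition in the cokernel, and scalar multiplication matches via $(b\cdot)_\ast$), and natural in $A$, $B$, and $M$. Independence of the chosen presentation is checked by comparing any two presentations through a common refinement, which simultaneously handles naturality in $A$ and $B$. The main technical nuisance, and the step I would expect to take the most care, is checking that the algebra law on $E_\phi$ truly descends to the quotient and defines an associative unital $A$-algebra; this reduces to the identities $\phi(jj')\in 0$ and $M\cdot M=0$ inside $E_\phi$, and is short but slightly fiddly.
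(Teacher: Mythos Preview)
Your argument is sound and follows the standard presentation-based route to this isomorphism. Note, however, that the paper does not actually prove this proposition: its entire proof is the citation \cite[16.12]{An1974}. So there is no ``paper's approach'' to compare against beyond pointing to Andr\'e's book, where the result is established via the simplicial machinery underlying the definition of $\HH^{\ast}(A,B,-)$.

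Two small points worth tightening. First, your identification $\HH^1(R,B,M)\cong\HHom_B(J/J^2,M)$ in the Jacobi--Zariski sequence is itself a special case of the proposition (the surjective case), so to avoid circularity you should invoke it as a direct computation from the cotangent complex of a surjection (e.g.\ \cite[6.1]{An1974}) rather than as an instance of what you are proving. Second, when you write ``$r\mapsto(r,-D(r))$ defines an $A$-algebra section of $E_\phi\to B$'', the formula is a priori a map $R\to E_\phi$; you should say explicitly that it kills $J$ (since $(j,-D(j))=(j,-\phi(j))\equiv 0$ in $E_\phi$) and therefore descends to the desired section $B\to E_\phi$. Neither of these is a genuine gap, but both deserve a sentence in a written-up version.
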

\begin{proof}
	 \cite[16.12]{An1974}.
\end{proof}

\section{Mehta's construction}

In this section, when we speak of the connecting homomorphism for the $\Tor$ long exact sequence, we will refer to the canonical choice of this homomorphism, as done in \cite[$\S$4, n.5]{BoA10}.

\begin{defn} \label{homMehta}
	Let $R \to B \to C$ be surjective ring homomorphisms and $N$ a $C$-module. Define a $C$-module homomorphism
	\[ \varphi \colon \Ex_R^1(B,C) \to \Ex_B^2(N,N) \]
	in the following way: for an exact sequence of $R$-modules $0 \to K \to F \to N \to 0 $ with $F$ projective, take the exact sequence
	\[ \theta = \left(0 \to \Tor_1^R(B,N) \to B \otimes_R K \to B \otimes_R F \to B \otimes_R N = N \to 0 \right) \]
	
	Let $[\xi] = \left[ 0 \to C \to T \to B \to 0 \right] \in \Ex_R^1(B,C)$, and let
	\[ \partial \colon \Tor_1^R(B,N) \to \Tor_0^R(C,N) = N \]
	be the connecting homomorphism associated to $\xi$ and $- \otimes_R N$.
	
	Set $\varphi[\xi] \eqdef [\partial_{\ast}(\theta)] \in \Ex_B^2(N,N)$.
\end{defn}

This definition was given by Mehta (for the main case, $C=B$) \cite{Mehta} and, in the case in which $I = \ker(R \to B)$ is generated by a regular sequence $x_1, \dots, x_n$, if $\hat{x_i} \in \HHom_B(I/I^2,B) = \Ex
_R^1(B,B)$ denotes the homomorphism given by $\hat{x_i}(x_i)=1$, $\hat{x_i}(x_j)=0$ for $i \neq j$, Mehta proved that the operation of this $\hat{x_i}$ onto $\Ex_B^{\ast}(N,M)$ in the sense of Gulliksen \cite{Gulliksen} is precisely the Yoneda product by the extension $\varphi(\hat{x_i})$.

\begin{prop} \label{MehtaProof}
	$\varphi$ is well-defined (it depends only on the class of the extension $\xi$, and not on the chosen projective presentation $F \to N$) and a $C$-module homomorphism.
\end{prop}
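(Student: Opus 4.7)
The strategy is to factor the definition of $\varphi$ as
\[
\Ex_R^1(B,C) \xrightarrow{\delta} \HHom_C(\Tor_1^R(B,N), N) \xrightarrow{\Psi} \Ex_B^2(N,N),
\]
where $\delta[\xi]$ is the connecting homomorphism $\partial$ for $\xi \otimes_R (-)$ evaluated at $N$, and $\Psi(f) = f_{\ast}(\theta)$. I shall check (a) $\delta$ is well-defined and $C$-linear, (b) $\Psi$ is $C$-linear once a presentation is fixed, and (c) the composition $\Psi \circ \delta$ does not depend on the choice of projective presentation of $N$.

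For (a), if $\xi$ and $\xi'$ are directly related via a commutative diagram with identity maps on the ends, then naturality of the connecting homomorphism forces the two connecting maps to agree; this propagates through the equivalence relation on $\Ex_R^1(B,C)$, giving well-definedness. The $C$-linearity is proved in two parts. For $c \in C$, I would note that $c[\xi] = [(c\cdot)_{\ast}(\xi)]$ by the remark following Proposition \ref{Extex}, and the induced morphism of short exact sequences $\xi \to (c\cdot)_{\ast}(\xi)$ has left vertical $(c \cdot)$ and right vertical $\mathrm{id}_B$; naturality of $\partial$ together with the identification $C \otimes_R N = N$ then yields $\delta(c[\xi]) = c\,\delta[\xi]$. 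For additivity, I would unpack the Baer sum $[\xi_1] + [\xi_2] = [(+)_{\ast}\Delta^{\ast}(\xi_1 \oplus \xi_2)]$: the connecting map of $\xi_1 \oplus \xi_2$ is $\partial_{\xi_1} \oplus \partial_{\xi_2}$, whose pullback along $\Delta \colon B \to B \oplus B$ is $(\partial_{\xi_1}, \partial_{\xi_2})$ and whose subsequent pushforward along $+ \colon C \oplus C \to C$ is $\partial_{\xi_1} + \partial_{\xi_2}$.

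Claim (b) is immediate from the additivity and scalar-compatibility of pushforward in $\Ex_B^2$ already recorded in the remark following Proposition \ref{Extex}. For (c), given two projective presentations of $N$, a standard comparison theorem yields a chain map between them lifting $\mathrm{id}_N$; tensoring with $B$ produces a morphism of $2$-extensions $\theta \to \theta'$ which is the identity on $N$ and, by naturality of the isomorphism $\Tor_1^R(B,N) \cong \kerr(B \otimes_R K \to B \otimes_R F)$, also the identity on $\Tor_1^R(B,N)$. Hence $[\theta] = [\theta']$ in $\Ex_B^2(N, \Tor_1^R(B,N))$, so $f_{\ast}(\theta) \sim f_{\ast}(\theta')$ for any $f$, and in particular $\Psi(\delta[\xi])$ does not depend on the presentation.

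The main obstacle I anticipate is the $C$-linearity of $\delta$: while each ingredient (naturality of $\partial$ and the Baer sum computation) is standard, carrying the identification $C \otimes_R N = N$ correctly through the connecting homomorphisms requires some care, since $\partial$ a priori lands in $\Tor_0^R(C,N)$ rather than in $N$, and the $C$-action comes from an $R$-linear map $C \to C$ translating into the usual scalar action on $N$ only after this identification.
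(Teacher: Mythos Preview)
Your proof is correct and takes a somewhat different route from the paper's. Both arguments rest on the same core computation---that the connecting homomorphism $\partial$ depends additively on $[\xi]$, via naturality of the long exact sequence under the Baer-sum description $[\xi_1]+[\xi_2]=(+)_*\Delta^*(\xi_1\oplus\xi_2)$---but the paper then proceeds by a direct hands-on comparison: it writes out the Baer sum of $\varphi[\xi_1]$ and $\varphi[\xi_2]$ as an explicit $2$-extension and constructs, via the universal property of the pushout, a map $\eta\colon P_W\to P_{1,2}$ exhibiting it as directly related to $\varphi([\xi_1]+[\xi_2])$. Your factorization $\varphi=\Psi\circ\delta$ bypasses that diagram chase: once $\delta$ is $C$-linear, everything reduces to the additivity of $f\mapsto f_*(\theta)$ in $f$. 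One small caveat on your citation for (b): the remark you invoke records that $\alpha_*$ is linear in the \emph{extension} for fixed $\alpha$, whereas you need linearity in $\alpha$ for fixed $\theta$; this is of course the additive-functor property of $\Ex_B^2(N,-)\cong\Ext_B^2(N,-)$, which follows from Proposition~\ref{Extex} rather than from that remark. The paper's explicit approach has the virtue of staying entirely inside the $\Ex$ picture, while yours is cleaner, more modular, and makes the independence-of-presentation statement (c) transparent as an equality $[\theta]=[\theta']$ in $\Ex_B^2(N,\Tor_1^R(B,N))$.
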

\begin{proof}
	It is easy to see that it is well-defined. We will show that it is a $C$-module homomorphism, since the proof does not appear in \cite{Mehta}.
	
	Let
	\[ [\xi_j] = \left[0 \to C \xrightarrow{i_j} T_j \xrightarrow{p_j} B \to 0\right] \in \Ex_R^1(C,B), \mkern9mu j = 1,2\]
	and fix an exact sequence of $R$-modules $0 \to K \to F \to N \to 0$ with $F$ projective. As in Definition \ref{homMehta}, consider the induced exact sequence
	\[ \theta = \left(0 \to \Tor_1^R(B,N) \xrightarrow{\partial_N} B \otimes_R K \to B \otimes_R F \to B \otimes_R N = N \to 0 \right), \]
	so one has $\varphi[\xi_j] = [(\partial_j)_{\ast}(\theta)]$, $\partial_j \colon \Tor_1^R(B,N) \to \Tor_0^R(C,N) = N$ being the connecting homomorphisms associated with $\xi_j$.
	
	Now, consider the Baer sum $[\xi_1] + [\xi_2]$, given by the class of the bottom row of the diagram
	\begin{center}
		\begin{tikzcd}
			0 \arrow{r} & C \oplus C \arrow{r} & T_1 \oplus T_2 \arrow{r} & B \oplus B \arrow{r} & 0
			\\ 0 \arrow{r} & C \oplus C \arrow{r} \arrow[equal]{u} \arrow{d}{+} & T_1 \times_B T_2 \arrow{r} \arrow{u} \arrow{d} & B \arrow{r} \arrow{u}{\Delta} \arrow[equal]{d} & 0
			\\ 0 \arrow{r} & C \arrow{r} & W \arrow{r} & B \arrow{r} & 0
			
			\arrow[from=2-3, to=1-4, "\blacksquare"{anchor=center, pos=0.125, rotate=90}, draw=none]
			\arrow[from=3-3, to=2-2, "\blacksquare"{anchor=center, pos=0.125, rotate=180}, draw=none]
		\end{tikzcd}
	\end{center}
	and note that this diagram induces the following one, for the long exact homology sequence
	\begin{center}
		\begin{tikzcd}[scale cd=0.68, sep=small]
			\Tor_1^R(B, N) \oplus \Tor_1^R(B, N) \arrow{r}{(\partial_1, \partial_2)} & (C \otimes_R N) \oplus (C \otimes_R N) \arrow{r} & (T_1 \otimes_R N) \oplus (T_2 \otimes_R N) \arrow{r} & (B \otimes_R N) \oplus (B \otimes_R N) \arrow{r} & 0
			\\ \Tor_1^R(B \oplus B, N) \arrow{r} \arrow[equal]{u} & (C \oplus C) \otimes_R N \arrow{r} \arrow[equal]{u} & (T_1 \oplus T_2) \otimes_R N \arrow{r} \arrow[equal]{u} & (B \oplus B) \otimes_R N \arrow{r} \arrow[equal]{u} & 0
			\\ \Tor_1^R(B,N) \arrow{r} \arrow{u}{\Delta_1} \arrow[equal]{d} & (C \oplus C) \otimes_R N \arrow{r} \arrow[equal]{u} \arrow{d}{+} & (T_1 \times_B T_2) \otimes_R N \arrow{r} \arrow{u} \arrow{d} & B \otimes_R N \arrow{r} \arrow{u}{\Delta_0} \arrow[equal]{d} & 0
			\\ \Tor_1^R(B,N) \arrow{r}{\partial_W} & C \otimes_R N \arrow{r} & W \otimes_R N \arrow{r} & B \otimes_R N \arrow{r} & 0
		\end{tikzcd}
	\end{center}
	in which $\partial_W$ denotes the connecting homomorphism used to compute $\varphi([\xi_1] + [\xi_2])$. The diagram shows that $\partial_W = + \circ (\partial_1, \partial_2) \circ \Delta_1$.
	
	The explicit computations of $\varphi[\xi_j]$ for $j=1,2$ are given by the diagrams
	\begin{center}
		\begin{tikzcd}
			0 \arrow{r} & \Tor_1^R(B,N) \arrow{r}{\partial_N} \arrow{d}{\partial_j} & B \otimes_R K \arrow{r} \arrow{d}{\beta_j} & B \otimes_R F \arrow{r} \arrow[equal]{d} & B \otimes_R N = N \arrow{r} \arrow[equal]{d} & 0
			\\ 0 \arrow{r} & N \arrow{r}{\alpha_j} & P_j \arrow{r} & B \otimes_R F \arrow{r} & N \arrow{r} & 0
			
			\arrow[from=2-3, to=1-2, "\blacksquare"{anchor=center, pos=0.125, rotate=180}, draw=none]
		\end{tikzcd}
	\end{center}
	so $\varphi[\xi_1] + \varphi[\xi_2]$ is just the equivalence class of the bottom row of the following diagram
	\begin{center}
		\begin{tikzcd}[column sep=0.8cm]
			0 \arrow{r} & N \oplus N \arrow{r}{(\alpha_1, \alpha_2)} & P_1 \oplus P_2 \arrow{r} & (B \otimes_R F) \oplus (B \otimes_R F) \arrow{r} & N \oplus N \arrow{r} & 0
			\\ 0 \arrow{r} & N \oplus N \arrow{r}{(\alpha_1, \alpha_2)} \arrow[equal]{u} \arrow{d}{+} & P_1 \oplus P_2 \arrow{r} \arrow[equal]{u} \arrow{d}{\pi} & (B \otimes_R F) \times_N (B \otimes_R F) \arrow{r} \arrow{u} \arrow[equal]{d} & N \arrow{r} \arrow{u}{\Delta} \arrow[equal]{d} & 0
			\\0 \arrow{r} & N \arrow{r}{h} & P_{1,2} \arrow{r} & (B \otimes_R F) \times_N (B \otimes_R F) \arrow{r} & N \arrow{r} & 0
			
			\arrow[from=2-4, to=1-5, "\blacksquare"{anchor=center, pos=0.125, rotate=90}, draw=none]
			\arrow[from=3-3, to=2-2, "\blacksquare"{anchor=center, pos=0.125, rotate=180}, draw=none]
		\end{tikzcd}
	\end{center}
	
	To compare this extension with 
	\[\varphi([\xi_1]+[\xi_2]) = [(\partial_W)_{\ast}(\theta)] = \left[ 0 \to N \to P_W \to B \otimes_R F \to N \to 0\right],\]
	one can induce a map $\eta \colon P_W \to P_{1,2}$ by the universal property
	\begin{center}
		\begin{tikzcd}
			\Tor_1^R(B,N) \arrow{r}{\partial_N} \arrow{d}{\partial_W} & B \otimes_R K \arrow{d} \arrow[bend left]{ddr}{\pi \circ (\beta_1, \beta_2) \circ \Delta} &
			\\ N \arrow{r} \arrow[bend right,swap]{drr}{h} & P_W \arrow[dashed]{dr}{\eta} &
			\\ & & P_{1,2}
			
			\arrow[from=2-2, to=1-1, "\blacksquare"{anchor=center, pos=0.125, rotate=180}, draw=none]
		\end{tikzcd}
	\end{center}
	which can be used to show that $\varphi([\xi_1]+[\xi_2])$ and $\varphi[\xi_1] + \varphi[\xi_2]$ are directly related:
	\begin{center}
		\begin{tikzcd}
			0 \arrow{r} & N \arrow{r} \arrow[equal]{d} & P_W \arrow{r} \arrow{d}{\eta} & B \otimes_R F \arrow{r} \arrow{d}{\Delta} & N \arrow{r} \arrow[equal]{d} & 0
			\\0 \arrow{r} & N \arrow{r}{h} & P_{1,2} \arrow{r} & (B \otimes_R F) \times_N (B \otimes_R F) \arrow{r} & N \arrow{r} & 0
		\end{tikzcd}
	\end{center}
	
	Checking that $\varphi[c \cdot \xi] = c \cdot \varphi[\xi]$ for all $c \in C$ and $[\xi] \in \Ex_R^1(B,C)$ is straightforward.
\end{proof}

\section{The homomorphism $\psi$}

\begin{defn} \label{homPsi}
	Let $A \to B \xrightarrow{q} C$ be ring homomorphisms, with $q$ surjective, and $N$ a $C$-module. Define a $C$-module homomorphism
	\[ \psi \colon \Exal_A(B,C) \to \Ex_B^2(N,N) \]
	in the following way: for an infinitesimal extension
	\[ [\xi] = \left[0 \to C \to E \xrightarrow{\pi} B \to 0\right] \in  \Exal_A(B,C)\]
	and an exact sequence of $E$-modules $0 \to K \to F \to N \to 0 $ with $F$ projective ($N$ is an $E$-module via $\pi$), take the exact sequence
	\[ \vartheta = \left(0 \to \Tor_1^E(B,N) \to B \otimes_E K \to B \otimes_E F \to B \otimes_E N = N \to 0 \right) \]
	
	The extension $\xi$ induces an exact sequence
	\[ 0 \to \Tor_1^E(B,N) \xrightarrow{\partial} C \otimes_E N \to E \otimes_E N \xrightarrow{\cong} B \otimes_E N  \to 0 \]
	Identifying $\Tor_1^E(B,N)$ with $C \otimes_E N\cong N$ through the canonical homomorphism $\partial$ in the exact sequence $\vartheta$, we obtain $\psi([\xi]) \in \Ex_B^2(N,N)$, that is,
	\[ \psi([\xi]) \eqdef [\partial_{\ast}(\vartheta)]. \]
	
\end{defn}

It can be easily checked that $\psi$ is well-defined. We will see in Proposition \ref{ProofPsi} that it is a $C$-module homomorphism.

\begin{rem} \label{homAlpha}
	Let $A \to R \xrightarrow{\pi} B \xrightarrow{q} C$ be ring homomorphisms, with $\pi$ and $q$ surjective, and $M$ a $C$-module. Let $I = \ker(\pi)$. We have a natural homomorphism
	\[ \alpha \colon \Ex_R^1(B,M) \to \Exal_A(B,M),\]
	defined as follows. Let
	\[ [\xi] = \left[ 0 \to M \to T \to B \to 0 \right] \in \Ex_R^1(B,M) \]
	Since $R$ is a free $R$-module, we have a commutative diagram of vertical $R$-linear maps
	\begin{center}
		\begin{tikzcd}
			0 \arrow{r} & I \arrow{r} \arrow[dashed]{d}{g} & R \arrow{r} \arrow{d}{f} &  B \arrow{r} \arrow[equal]{d} & 0
			\\0 \arrow{r} & M \arrow{r} & T \arrow{r} &  B \arrow{r} & 0
		\end{tikzcd}
	\end{center}
	
	We define $\alpha([\xi]) \eqdef \left[g_{\ast}(0 \to I \to R \to B \to 0) \right] = \left[0 \to M \to M \oplus_I R \to B \to 0\right].$
\end{rem}

As before, a routine check shows that $\alpha$ is well-defined and it is a $C$-module homomorphism. In fact, identifying $\Ex_R^1(B,M)$ with $\Exal_R^1(B,M)$ by Proposition \ref{Extex} and \cite[6.1]{An1974}, the homomorphism $\alpha$ (thought at the level of Andr\'e-Quillen homology, see Proposition \ref{H1Exalcom}) is the homomorphism in the Jacobi-Zariski exact sequence \cite[5.1]{An1974}.

It is easy to see that $\psi$ is natural in the following sense:
\begin{defn}
	Let $A \to B \xrightarrow{q} C$ be ring homomorphisms, with $q$ surjective, and $N$ a $C$-module. Define
	\[ \psi_{A,B,C,N} \colon \Exal_A(B,C) \to \Ex_B^2(N,N)\]
	as in Definition \ref{homPsi}. If $q = \mathrm{id}_B$, we abridge it to $\psi_{A,B,N}$.
\end{defn}

\begin{prop}
	\begin{enumerate}
		\item[(i)] If $A \to R \xrightarrow{p} B \xrightarrow{q} C $ are ring homomorphisms with $q$ surjective and $N$ is a $C$-module, the following diagram commutes:
		\begin{center}
			\begin{tikzcd}
				\Exal_R(B,C) \arrow{r} \arrow{d}{\psi_{R,B,C,N}} & \Exal_A(B,C) \arrow{d}{\psi_{A,B,C,N}}
				\\ \Ex_B^2(N,N) \arrow[equal]{r} & \Ex_B^2(N,N)
			\end{tikzcd}
		\end{center}
		\item[(ii)] If $qp$ is surjective, the following diagram is also commutative:
		\begin{center}
			\begin{tikzcd}
				\Exal_A(B,C) \arrow{r} \arrow{d}{\psi_{A,B,C,N}} & \Exal_A(R,C) \arrow{d}{\psi_{A,R,C,N}}
				\\ \Ex_B^2(N,N) \arrow{r} & \Ex_R^2(N,N)
			\end{tikzcd}
		\end{center}
		\item[(iii)] If $A \to B \xrightarrow{q} C \xrightarrow{q'} C' $ are ring homomorphisms with $q$ and $q'$ surjective and $N$ is a $C$-module, we have a commutative diagram:
		\begin{center}
			\begin{tikzcd}
				\Exal_A(B,C) \arrow{r} \arrow{d}{\psi_{A,B,C,N}} & \Exal_A(B,C') \arrow{d}{\psi_{A,B,C',N}}
				\\ \Ex_B^2(N,N) \arrow[equal]{r} & \Ex_B^2(N,N)
			\end{tikzcd}
		\end{center}
	\end{enumerate}
\end{prop}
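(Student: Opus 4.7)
The plan is to verify each of the three squares by directly unpacking Definition \ref{homPsi} and using naturality of the Tor long exact sequence with respect to change of rings. In all three cases the common strategy is: pick a single projective resolution of $N$ over a suitable common ring, compute both sides of the claimed equality from this resolution, and then check that the canonical identifications of $\Tor_1$ with $N$ arising on the two sides agree.

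Part (i) is essentially tautological. The construction of $\psi_{-,B,C,N}([\xi])$ from an infinitesimal extension $\xi = (0 \to C \to E \xrightarrow{\pi} B \to 0)$ refers only to the ring $E$, the $E$-module structure on $N$ (through $q\pi$), the kernel $C$, and a projective $E$-resolution of $N$; the base ring ($A$ or $R$) enters nowhere. Since the forgetful map $\Exal_R(B,C) \to \Exal_A(B,C)$ leaves the underlying exact sequence unchanged, both compositions in the square yield literally the same extension $[\partial_{\ast}(\vartheta)]$ in $\Ex_B^2(N,N)$.

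For part (ii), write $p^{\ast}[\xi] = [0 \to C \to E \times_B R \to R \to 0]$ and note the canonical ring surjections $E \times_B R \twoheadrightarrow E$ and $E \times_B R \twoheadrightarrow R$, whose composites with $E \to B$ and $R \to B$ agree. I would fix a free $(E \times_B R)$-presentation $0 \to K' \to F' \to N \to 0$ of $N$ (possible since $N$ becomes an $(E \times_B R)$-module via $E \times_B R \to B \to C$). Then $F := E \otimes_{E \times_B R} F'$ is a free $E$-presentation of $N$, because $E \otimes_{E \times_B R} N \cong N$ (using $CN=0$), and similarly $R \otimes_{E \times_B R} F'$ is a free $R$-presentation of $N$. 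The identifications $B \otimes_E F \cong B \otimes_{E \times_B R} F' \cong B \otimes_R (R \otimes_{E \times_B R} F')$ combined with the naturality of the connecting homomorphism applied to the morphism $p^{\ast}\xi \to \xi$ of short exact sequences produce a morphism between the $4$-term sequences $\vartheta'$ and $\vartheta$ realizing the claimed commutativity, after composition with the forgetful map $\Ex_B^2(N,N) \to \Ex_R^2(N,N)$.

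Part (iii) proceeds analogously: the pushout $q'_{\ast}[\xi] = [0 \to C' \to E' \to B \to 0]$ with $E' := C' \oplus_C E$ carries a canonical $A$-algebra map $E \to E'$, $e \mapsto (0,e)$, making $E'$ into an $E$-algebra. If $F \to N$ is a projective $E$-presentation, then $F' := E' \otimes_E F$ is a projective $E'$-presentation of $N$ (a direct computation, combined with $C' \cdot N = 0$, gives $E' \otimes_E N \cong N$). The identity $B \otimes_{E'} F' \cong B \otimes_E F$ together with naturality of the connecting homomorphism for the morphism $\xi \to q'_{\ast}\xi$ yields the desired equality in $\Ex_B^2(N,N)$. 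The main technical point in both (ii) and (iii) is checking that the two canonical isomorphisms $\Tor_1(-,N) \xrightarrow{\sim} N$ used in the definition of the two $\psi$'s match under naturality; this is a diagram chase exploiting that all the middle rings ($E$, $E \times_B R$, $E'$) act on $N$ through the common quotient $B \to C$ (resp.\ $C'$).
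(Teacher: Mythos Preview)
The paper does not give a proof of this proposition; it simply declares that ``it is easy to see that $\psi$ is natural'' and states the result. So there is nothing to compare against, and I evaluate your argument on its own.

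Parts (i) and (iii) are fine. For (i) your observation that the base ring never enters the construction is exactly the point. For (iii), once one reads the hypothesis as ``$N$ is a $C'$-module'' (which is what is needed for $\psi_{A,B,C',N}$ to make sense), the map $E\to E'=C'\oplus_C E$ is surjective with kernel $i(\ker q')$, and since $\ker(q')\cdot N=0$ you do get $E'\otimes_E N\cong N$; your sketch goes through.

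Part (ii), however, contains a genuine error. You assert that $E\times_B R\twoheadrightarrow E$ is surjective and that $E\otimes_{E\times_B R}N\cong N$. Neither is true in general, because the hypothesis is only that $qp$ is surjective, not that $p$ is. Concretely, take $A=R=k$, $B=k[x]$, $C=k=B/(x)$, $N=k$, and the extension $E=k[x,\epsilon]/(\epsilon^2,x\epsilon)\to k[x]$. Then $E'=E\times_B R\cong k[\epsilon]/(\epsilon^2)$ embeds properly in $E$, and
\[
E\otimes_{E'}N \;=\; E\otimes_{k[\epsilon]/(\epsilon^2)}k \;=\; E/\epsilon E \;\cong\; k[x]\;\neq\; k=N.
\]
So base-changing the $E'$-presentation $0\to K'\to F'\to N\to 0$ along $E'\to E$ does \emph{not} give a short exact sequence with middle term $E\otimes_{E'}K'$; the kernel of $F:=E\otimes_{E'}F'\to N$ is strictly larger. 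Your identification of the two $4$-term sequences $\vartheta,\vartheta'$ via simultaneous base change therefore breaks down.

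The strategy is salvageable: take $K:=\ker(F\to N)$ (not $E\otimes_{E'}K'$), note that $F\to N$ is still a free $E$-presentation, and build the comparison map $\vartheta'\to\vartheta$ of $R$-complexes from the $E'$-linear map $F'\to F$, $f'\mapsto 1\otimes f'$ (which carries $K'$ into $K$), together with the natural maps $R\otimes_{E'}(-)\to B\otimes_{E'}(-)\to B\otimes_E(-)$. Naturality of the connecting homomorphism for the map of short exact sequences $p^{\ast}\xi\to\xi$ then finishes the argument. The point is that you should compare presentations via a lift rather than assert they coincide after base change.
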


\begin{thm} \label{commPhiPsi}
	Let $A \to R \xrightarrow{\pi} B \xrightarrow{q} C $ be ring homomorphisms, with $\pi$ and $q$ surjective, and $N$ a $C$-module. The triangle
	\begin{center}
		\begin{tikzcd}
			\Ex_R^1(B,C) \arrow{rr}{\alpha} \arrow[swap]{dr}{\varphi} & & \Exal_A(B,C) \arrow{dl}{\psi}
			\\ & \Ex_B^2(N,N) &
		\end{tikzcd}
	\end{center}
	is commutative.
\end{thm}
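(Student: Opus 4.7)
The plan is to compute both $\varphi([\xi])$ and $\psi(\alpha([\xi]))$ on a single compatible pair of resolutions of $N$---one over $R$ and one over $E$---and then compare the resulting $4$-term exact sequences in $\Ex_B^2(N,N)$. Write $[\xi]=[0\to C\to T\to B\to 0]$, let $f\colon R\to T$ be the lift used in Remark \ref{homAlpha}, and set $E:=T$ equipped with the $A$-algebra structure coming from $f$, so that $\alpha([\xi])$ is the infinitesimal extension $0\to C\to E\to B\to 0$ and $g:=f|_I\colon I\to C$ is the map producing the pushout. Pick a free $E$-module $F'=\bigoplus_{s\in S}E\twoheadrightarrow N$ and let $F=\bigoplus_{s\in S}R$ with the natural $R$-linear map $F\to F'$ matching the generators; the composite $F\to F'\to N$ is surjective because both the $R$-action and the $E$-action on $N$ factor through $B$. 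Let $K=\ker(F\to N)$, $K'=\ker(F'\to N)$, with the induced $R$-linear map $K\to K'$.

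Next I would identify the two $\Tor$ terms and pin down the connecting homomorphisms. Applying $-\otimes_R N$ to $0\to I\to R\to B\to 0$ and using that $I$ annihilates $N$ while $R\otimes_R N\to B\otimes_R N$ is the identity on $N$ collapses the long exact sequence to $\Tor_1^R(B,N)\cong I\otimes_R N\cong I/I^2\otimes_B N$. Exactly the same argument applied to $0\to C\to E\to B\to 0$ tensored with $-\otimes_E N$ yields $\Tor_1^E(B,N)\cong C\otimes_E N\cong C\otimes_B N\cong N$, the last step using that $q$ is surjective and $N$ is a $C$-module. By naturality of the connecting homomorphism applied to the morphism $(g,f,1_B)\colon(0\to I\to R\to B\to 0)\to(0\to C\to T\to B\to 0)$ of $R$-module extensions, Mehta's $\partial$ corresponds to $\bar g\otimes 1_N\colon I/I^2\otimes_B N\to C\otimes_B N\cong N$ under these identifications, while $\partial'$ corresponds to $1_N$.

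The two resolutions fit into a commutative diagram
\begin{center}
\begin{tikzcd}[scale cd=0.9, sep=small]
0 \arrow{r} & \Tor_1^R(B,N) \arrow{r} \arrow{d}{u} & B\otimes_R K \arrow{r} \arrow{d} & B\otimes_R F \arrow{r} \arrow{d}{\cong} & N \arrow{r} \arrow[equal]{d} & 0\\
0 \arrow{r} & \Tor_1^E(B,N) \arrow{r} & B\otimes_E K' \arrow{r} & B\otimes_E F' \arrow{r} & N \arrow{r} & 0
\end{tikzcd}
\end{center}
in which $B\otimes_R F=\bigoplus_{s\in S}B=B\otimes_E F'$ canonically and $u$ is the induced map on kernels. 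The key point is that $u$ coincides with $\bar g\otimes 1_N$ under the identifications of the previous step, so $u=\partial$. Granting this, the pushout $\partial_*(\theta)$ of the top row by $\partial$ is directly related to the bottom row $\vartheta$ via the diagram (identities on the first, third, and fourth terms), and $\vartheta$ equals $\partial'_*(\vartheta)$ since $\partial'=1_N$. Hence $\varphi([\xi])=[\partial_*(\theta)]=[\vartheta]=\psi(\alpha([\xi]))$.

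The main obstacle is verifying $u=\bar g\otimes 1_N$. Via naturality of the change-of-rings map $\Tor_1^R\to\Tor_1^E$ with respect to the $E$-module short exact sequence $0\to C\to E\to B\to 0$, $u$ sits in a commutative square $\partial'\circ u=w\circ\partial$, where $w\colon C\otimes_R N\to C\otimes_E N$ is the natural surjection; because both the $R$-actions on $C$ and $N$ factor through $B=E/C$, $w$ is an isomorphism. Combined with $\partial=\bar g\otimes 1_N$ and $\partial'=1_N$ from the previous step, and with the identification $C\otimes_E N\cong N$, this yields $u=\bar g\otimes 1_N$.
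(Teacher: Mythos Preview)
Your argument follows the same strategy as the paper's proof: represent $[\xi]$ by a ring extension, choose compatible presentations of $N$ over $R$ and over $E$, and show that the change-of-rings comparison map on $\Tor_1$ intertwines the two connecting homomorphisms, so that the two $4$-term sequences are directly related. The paper simply draws the resulting cube and asserts its commutativity, whereas you unpack the left face by identifying $\Tor_1^R(B,N)\cong I/I^2\otimes_B N$, $\Tor_1^E(B,N)\cong N$, and the connecting maps explicitly; your choice of $F=\bigoplus R$, $F'=\bigoplus E$ on the same index set is a nice device that makes $B\otimes_R F\cong B\otimes_E F'$ automatic.

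There is one point that needs to be said rather than assumed. You write ``set $E:=T$ equipped with the $A$-algebra structure coming from $f$'', but $T$ is a priori only an $R$-\emph{module}, and an $R$-linear lift $f\colon R\to T$ does not endow $T$ with a multiplication. What is needed (and what the paper invokes at the outset) is the isomorphism $\Ex_R^1(B,C)\cong\Exal_R(B,C)$, valid because $\pi$ is surjective, which lets you choose the representative $\xi$ so that $T$ is already a commutative $R$-algebra with square-zero kernel; then $f$ may be taken to be the structure map, and $\alpha([\xi])$ is literally the same extension regarded over $A$. Once you insert this sentence, the identification $E=T$ is legitimate and the rest of your computation goes through.
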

\begin{proof}
	We have seen in Remark \ref{homAlpha} that we have an isomorphism $\Ex_R^1(B,C) \to \Exal_R(B,C)$ whose inverse is just forgetting the additional structure of the extension. Hence, one can take any element from $\Ex_R^1(B,C)$ to be the class (in $\Ex_R^1(B,C)$) of an infinitesimal extension over $R$
	\[ \xi = \left(0 \to C \to E \to B \to 0\right)\]
	Thus, $\alpha([\xi])$ will be the equivalence class of $\xi$, thought of as an extension over $A$.
	
	Let $0 \to K \to F \to N \to 0 $ be an exact sequence of $E$-modules, with $F$ projective. Then $\psi \alpha ([\xi])$ is the equivalence class in $\Ex_B^2(N,N)$ of the bottom row of the diagram
	\begin{center}
		\begin{tikzcd}[column sep=0.7cm]
			0 \arrow{r} & \Tor_1^E(B,N) \arrow{r} \arrow[d, "\partial", "\cong"'] & B \otimes_E K \arrow{r} \arrow{d}{\cong} &  B \otimes_E F \arrow{r} \arrow[equal]{d} &  B \otimes_E N = N \arrow{r} \arrow[equal]{d} & 0
			\\ 0 \arrow{r} & \Tor_0^E(C,N)=N \arrow{r} & T \arrow{r} &  B \otimes_E F \arrow{r} &  N \arrow{r} & 0
			
			\arrow[from=2-3, to=1-2, "\blacksquare"{anchor=center, pos=0.125, rotate=180}, draw=none]
		\end{tikzcd}
	\end{center}
	where $\partial$ is the connecting homomorphism associated with the exact sequence \break $0 \to C \to E \to B \to 0$.
	
	Now consider an exact sequence of $R$-modules
	\[ 0 \to K' \to F' \to N \to 0\]
	with $F'$ projective. Then $\varphi([\xi])$ is the equivalence class of the bottom row of the diagram
	\begin{center}
		\begin{tikzcd}[column sep=0.65cm]
			0 \arrow{r} & \Tor_1^R(B,N) \arrow{r} \arrow{d}{\tilde{\partial}} & B \otimes_R K' \arrow{r} \arrow{d} &  B \otimes_R F' \arrow{r} \arrow[equal]{d} &  B \otimes_R N = N \arrow{r} \arrow[equal]{d} & 0
			\\ 0 \arrow{r} & \Tor_0^R(C,N)=N \arrow{r} & W \arrow{r} &  B \otimes_R F' \arrow{r} &  N \arrow{r} & 0
			
			\arrow[from=2-3, to=1-2, "\blacksquare"{anchor=center, pos=0.125, rotate=180}, draw=none]
		\end{tikzcd}
	\end{center}
	where $\tilde{\partial}$ is the connecting homomorphism associated with $0 \to C \to E \to B \to 0$, but this time over the ring $R$.
	
	The commutative diagram of $R$-module maps (recall that $E$ is an $R$-algebra)
	\begin{center}
		\begin{tikzcd}
			0 \arrow{r} & K' \arrow{r} \arrow[dashed]{d} & F' \arrow{r} \arrow[dashed]{d} &  N \arrow{r} \arrow[equal]{d} & 0
			\\0 \arrow{r} & K \arrow{r} & F \arrow{r} &  N \arrow{r} & 0
		\end{tikzcd}
	\end{center}
	induces the following commutative diagram
	\begin{center}
		\begin{tikzcd}[scale cd= 0.68, column sep= 0.1 cm]
			0 \arrow{rr} & & \Tor_1^R(B,N) \arrow{rr} \arrow{dr} \arrow{dd}{\tilde{\partial}} & & B \otimes_R K' \arrow{rr} \arrow{dd} \arrow{dr} & & B \otimes_R F' \arrow{rr} \arrow[equal]{dd} \arrow{dr} & & B \otimes_R N = N \arrow{rr} \arrow[equal]{dd} \arrow[equal]{dr} &  & 0 &
			
			\\ &  0 \arrow[crossing over]{rr} & & \Tor_1^E(B,N) \arrow[crossing over]{rr}  & & B \otimes_E K \arrow[crossing over]{rr}  & & B \otimes_E F \arrow[crossing over]{rr}  & & B \otimes_E N = N \arrow{rr} \arrow[equal]{dd} & & 0
			
			\\ 0 \arrow{rr} & & N \arrow{rr} \arrow[equal]{dr} & & W \arrow{rr} \arrow[dashed]{dr} & &  B \otimes_R F' \arrow{rr} \arrow{dr} & & N \arrow{rr} \arrow[equal]{dr} & & 0 &
			
			\\ & 0 \arrow{rr} & & N \arrow{rr} & & T \arrow{rr} & & B \otimes_E F \arrow{rr} & & N \arrow{rr} & & 0
			
			\arrow[from=2-4, to=4-4, crossing over]
			\arrow[from=2-6, to=4-6, crossing over]
			\arrow[from=2-8, to=4-8, crossing over, equal]
			\arrow[from=2-8, to=4-8, crossing over, equal]
		\end{tikzcd}
	\end{center}
	which shows that $\varphi([\xi])=\psi\alpha([\xi])$.
\end{proof}

\begin{prop} \label{ProofPsi}
	With the notation from Definition \ref{homPsi}, $\psi$ is a $C$-module homomorphism.
\end{prop}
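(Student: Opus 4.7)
The plan is to follow the outline of Proposition~\ref{MehtaProof}, verifying both additivity, $\psi([\xi_1]+[\xi_2])=\psi[\xi_1]+\psi[\xi_2]$, and $C$-linearity, $\psi(c\cdot[\xi])=c\cdot\psi[\xi]$ for $c\in C$. The one substantive new difficulty compared to Mehta's setting is that the ambient ring inside $\Tor_1^E$ varies with the chosen infinitesimal extension $\xi$, so one cannot simply fix a projective resolution of $N$ over a single base ring. To get around this, I will use the pullback algebra $U=E_1\times_B E_2$, which is the middle term of $\Delta^\ast(\xi_1\oplus\xi_2)\in\Exal_A(B,C\oplus C)$ and which surjects onto $E_1$, $E_2$, and the Baer-sum algebra $W$.

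For additivity I would first verify a key preliminary fact by direct comparison of the Tor long exact sequences of the infinitesimal extensions
\[
0\to C\to E_j\to B\to 0,\qquad 0\to C\oplus C\to U\to B\to 0,\qquad 0\to C\to W\to B\to 0
\]
tensored with $N$: under the canonical identifications $\Tor_1^{E_j}(B,N)\cong N$, $\Tor_1^U(B,N)\cong N\oplus N$, $\Tor_1^W(B,N)\cong N$ given by the respective connecting homomorphisms, the Tor maps induced by the surjections $U\to E_j$ and $U\to W$ correspond, respectively, to the projections $p_j\colon N\oplus N\to N$ and to the sum $+\colon N\oplus N\to N$. This plays the role, in our setting, of Mehta's identity $\partial_W=+\circ(\partial_1,\partial_2)\circ\Delta_1$ in the proof of Proposition~\ref{MehtaProof}.

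Next I would choose a projective $U$-module $F$ surjecting onto $N$ (viewing $N$ as a $U$-module through $U\to B\xrightarrow{q}C$), let $K=\ker(F\to N)$, and use the tensor products with $E_j$, $U$, and $W$ over $U$ to construct the 2-extensions $\vartheta_1,\vartheta_2,\vartheta_U,\vartheta_W$ of Definition~\ref{homPsi} from a common source. Running the same chain of commutative diagrams as in Proposition~\ref{MehtaProof} (direct sum, pullback along $\Delta$, pushforward along $+$) then exhibits a direct equivalence between $(\partial_W)_\ast(\vartheta_W)$ and $(+)_\ast\Delta^\ast\bigl((\partial_1)_\ast\vartheta_1\oplus(\partial_2)_\ast\vartheta_2\bigr)$, which by the explicit description of the Baer sum in $\Ex_B^2(N,N)$ is $\psi[\xi_1]+\psi[\xi_2]$. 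The main obstacle is bookkeeping: when tensoring $0\to K\to F\to N\to 0$ with $E_j$ or $W$ over $U$, left-exactness fails in general because of the terms $\Tor_1^U(E_j,N)$ and $\Tor_1^U(W,N)$, and these discrepancies must be tracked through the diagrams until they are absorbed by the pushforwards along the connecting maps.

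For $C$-linearity, by definition $c\cdot[\xi]=[(c\cdot)_\ast\xi]$ is the pushout of $\xi$ along multiplication by $c$ on $C$. Functoriality of the Tor long exact sequence with respect to the resulting morphism of infinitesimal extensions shows that, under the identifications above, the connecting map of $(c\cdot)_\ast\xi$ is $c\cdot\partial$; pushing forward $\vartheta$ along $c\cdot\partial$ then agrees with first pushing forward along $\partial$ and then multiplying by $c$ in $\Ex_B^2(N,N)$, yielding $\psi(c\cdot[\xi])=c\cdot\psi[\xi]$.
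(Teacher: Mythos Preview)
Your direct approach via the fibered product $U=E_1\times_B E_2$ is sound in outline and should go through, but it is a genuinely different route from the paper's proof. The paper does not redo any of the diagram chase from Proposition~\ref{MehtaProof}; instead it reduces immediately to that proposition. One chooses a polynomial $A$-algebra $R$ with a surjection $R\to B$. Since $\Exal_A(R,C)=\HH^1(A,R,C)=0$, the Jacobi--Zariski sequence makes the forgetful map $\tilde\alpha\colon \Exal_R(B,C)\to\Exal_A(B,C)$ surjective. By the naturality of $\psi$ in the base ring together with Theorem~\ref{commPhiPsi}, $\psi_{A,B,C,N}\circ\tilde\alpha=\psi_{R,B,C,N}=\varphi$, and $\varphi$ is already known to be a $C$-module homomorphism by Proposition~\ref{MehtaProof}. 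Since $\tilde\alpha$ is a surjective $C$-linear map, $\psi_{A,B,C,N}$ is forced to be $C$-linear as well.

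The trade-off is clear: the paper's argument is a three-line reduction that leverages Theorem~\ref{commPhiPsi} (proved just before) and costs nothing extra, whereas your approach is self-contained but requires genuine work. In particular, the step you flag as ``bookkeeping'' is not entirely routine: the kernels $\Tor_1^U(E_j,N)$ and $\Tor_1^U(W,N)$ are nonzero (each is isomorphic to $N$, since the kernel of $U\to E_j$ is a square-zero copy of $C$ acting trivially on $N$), so the base-changed presentation $E_j\otimes_U F\to N$ has kernel $(E_j\otimes_U K)/\Tor_1^U(E_j,N)$ rather than $E_j\otimes_U K$. One then has to check carefully that the change-of-rings maps $\Tor_1^U(B,N)\to\Tor_1^{E_j}(B,N)$ really become the projections $p_j$ after the connecting-map identifications, and that the resulting morphisms of $2$-extensions assemble as claimed. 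All of this is doable, but if Theorem~\ref{commPhiPsi} is already available you can bypass it entirely.
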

\begin{proof}
	Let $R$ be a polynomial $A$-algebra such that there is an $A$-algebra surjection $R \to B$. We have a commutative diagram
	\begin{center}
		\begin{tikzcd}
			\Exal_R(B,C) \arrow{r}{\tilde{\alpha}} \arrow{d}{\psi_{R,B,C,N}} & \Exal_A(B,C) \arrow{r} \arrow{d}{\psi_{A,B,C,N}} & \Exal_A(R,C)
			\\ \Ex_B^2(N,N) \arrow[equal]{r}  & \Ex_B^2(N,N)
		\end{tikzcd}
	\end{center}
	in which the upper row is exact by Proposition \ref{H1Exalcom} and \cite[5.1]{An1974}, and moreover $\Exal_A(R,C)=0$ \cite[3.36]{An1974}.
	
	Therefore, $\tilde{\alpha}$ is surjective and so it suffices to show that $\psi_{R,B,C,N}$ is a homomorphism to see that $\psi_{A,B,C,N}$ is one as well. By Theorem \ref{commPhiPsi} and Remark \ref{homAlpha}, $\psi_{R,B,C,N}$ is precisely Mehta's $\varphi$, which was already proved to be a homomorphism in Proposition \ref{MehtaProof}.
\end{proof}

\begin{prop}\label{central}
	Let $A \to B \xrightarrow{q} C$ be ring homomorphisms, with $q$ surjective, $M$ a $C$-module, $[\xi], [\eta] \in \Exal_A(B,C)$. Then
	\[ \psi[\xi] \circ \psi[\eta] = \psi[\eta] \circ \psi[\xi]\]
	where $\circ$ denotes the Yoneda product.
	
	Therefore, for every $C$-module $N$, we have a $\Symm_C^{\ast}(\Exal_A(B,C))$-module structure on $\Ex_B^{\ast}(N,M)$ induced by $ [\xi] \cdot [\vartheta] \eqdef \psi[\xi] \circ [\vartheta]$ for $[\xi] \in \Exal_A(B,C)$ and $[\vartheta] \in \Ex_B^{\ast}(N,M)$.
\end{prop}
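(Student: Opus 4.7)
The strategy is to reduce the commutativity of $\psi[\xi] \circ \psi[\eta]$ to the known commutativity of Mehta's operators $\varphi$, which is built into Mehta's original $\Symm$-module structure in \cite{Mehta}.

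As in the proof of Proposition \ref{ProofPsi}, choose a polynomial $A$-algebra $R$ together with an $A$-algebra surjection $R \twoheadrightarrow B$. Since $\Exal_A(R,C) \cong \HH^1(A,R,C) = 0$ by \cite[3.36]{An1974}, the Jacobi-Zariski sequence makes the natural map $\tilde\alpha \colon \Exal_R(B,C) \to \Exal_A(B,C)$ surjective, and part (i) of the preceding naturality proposition gives $\psi_{A,B,C,M} \circ \tilde\alpha = \psi_{R,B,C,M}$. Lifting $[\xi], [\eta]$ to $[\xi'], [\eta'] \in \Exal_R(B,C) \cong \Ex_R^1(B,C)$ (Remark \ref{homAlpha}) and applying Theorem \ref{commPhiPsi}, the $\psi_{R,B,C,M}$-images of $\xi', \eta'$ agree with Mehta's $\varphi[\xi'], \varphi[\eta']$. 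So it is enough to establish
\[ \varphi[\xi'] \circ \varphi[\eta'] = \varphi[\eta'] \circ \varphi[\xi'] \qquad \text{in } \Ex_B^4(M,M). \]

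This is the commutativity of Mehta's operators, which is precisely what is needed to obtain the $\Symm_B^{\ast}(\widehat{I/I^2})$-module structure of \cite{Mehta}. One can also argue directly: fixing a projective $R$-resolution $F_\bullet$ of $M$, the element $\varphi[\xi']$ is realized by a degree $-2$ chain endomorphism of $B \otimes_R F_\bullet$ obtained by lifting the $R$-module extension $\xi'$, and two such lifts corresponding to $\xi'$ and $\eta'$ commute up to chain homotopy; via Proposition \ref{Extex} this translates to the desired equality of Yoneda products. With commutativity in hand, the module-structure assertion is formal: the associativity of Yoneda product together with the $C$-linearity of $\psi$ (Proposition \ref{ProofPsi}) yields an action of $\bigl(\Exal_A(B,C)\bigr)^{\otimes n}$ on $\Ex_B^{\ast}(N,M)$ by iterated left Yoneda multiplication, and the commutativity $\psi[\xi] \circ \psi[\eta] = \psi[\eta] \circ \psi[\xi]$ makes it factor through $\Symm_C^n(\Exal_A(B,C))$.

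The main obstacle is the commutativity of Mehta's operators at the chain level. Although it is implicit in \cite{Mehta} as part of the $\Symm$-structure, a self-contained verification demands producing compatible chain-level lifts of both extensions $\xi', \eta'$ and checking that their composites are homotopic in either order; the infinitesimal (square-zero) character of the kernels is what ultimately makes these homotopies available.
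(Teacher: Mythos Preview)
Your argument is correct and follows essentially the same route as the paper: reduce to Mehta's $\varphi$ via a polynomial $A$-algebra $R\twoheadrightarrow B$ (exactly the trick of Proposition~\ref{ProofPsi}), then invoke the commutativity of Mehta's operators, which the paper cites as \cite[Proposition 2.3]{Mehta} (with \cite[Remark (1), p.~706]{AvramovSun} as an alternative reference). Your additional sketch of a direct chain-level proof and the explicit unpacking of the $\Symm$-module structure are extra detail the paper omits, but the core strategy is identical.
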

\begin{proof}
	In a similar vein to the proof of Proposition \ref{ProofPsi}, we can limit ourselves to proving the proposition for $\varphi$, instead of $\psi$. But this follows from \cite[Proposition 2.3]{Mehta} (see also \cite[Remark (1), p. 706]{AvramovSun}).
\end{proof}

\begin{rem} \label{h2}
	Let $B=R/I$. Choosing a ring $A$ such that $R$ is an $A$-algebra, Theorem \ref{commPhiPsi} allows us to factorize Gulliksen operations from $\Ext_R^1(B,B)= \widehat{I/I^2}$ through $\HH^1(A,B,B)$. If we choose  $A$ so that $\HH^1(A,R,B)=0$ (for instance if $R$ is noetherian and formally smooth for the $I$-adic topology \cite[16.17]{An1974}), then the Jacobi-Zariski exact sequence shows that $\alpha$ is surjective, so the action on $\Ext_B^{\ast}(N,N) $ is ``more faithful'' from the ring $\Symm_B^{\ast} \HH^1(A,B,B)$ than from $\Symm_B^{\ast}(\widehat{I/I^2})$. But in general we cannot always expect to find such a ring $A$ so that the action of $\Symm_B^{\ast}(\HH^1(A,B,B))$ on $\Ext_B^{\ast}(N,N) $ be faithful. Take $R= \mathbb{Z}_{(p)}$ the local ring at the prime $(p)$ and $B$ its residue field $F_p$ so that the cohomological operations on $\Ext_{F_p}^{\ast}$ are zero. We cannot find a ring homomorphism $f \colon A \to R$ such that $\HH^1(A,B,B)=0$: if we had $f \colon A \to R$, assuming $A$ local with maximal ideal $\qqq$ and $f$ local (localizing $A$ at $f^{-1}(p\mathbb{Z}_p)$, since $ \HH^1(A,B,B) =\HH^1(A_{\qqq},B,B) $), the ring map $\mathbb{Z} \to A$ induces $\mathbb{Z}_p \to A$ such that the composition  $\mathbb{Z}_p \to A \xrightarrow{f} \mathbb{Z}_p$ is the identity map. Therefore, $\HH^1(\mathbb{Z}_p, F_p,F_p) \neq 0$ is a direct summand of $\HH^1(A, F_p,F_p)$.
\end{rem}

Let $f \colon A \to B$ be a complete intersection local homomorphism in the sense of \cite{AvramovAnnals}, that is, $f$ is a local homomorphism of noetherian local rings such that $\HH_n(A,B, -)=0$ for all $n \geq 2$. Assume that there exists a regular factorization $A \xrightarrow{u} R \xrightarrow{v} B$ of $f$, meaning that $u$ is a flat local homomorphism of noetherian rings with regular closed fiber and $v$ is surjective (this happens for instance if $B$ is complete \cite{AvramovFoxbyHerzog} or if $f$ is essentially of finite type).

\begin{thm}[Finite generation] \label{finiteGeneration}
	Under these hypotheses, $\Ext_B^{\ast}(M,N)$ is a \break $\Symm_B^{\ast}(\HH^1(A,B,B))$-module of finite type for any pair of $B$-modules of finite type $M$ and $N$ such that $\pdim_A(M) < \infty$ or $\idim_A(N) < \infty$.	
\end{thm}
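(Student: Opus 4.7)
The plan is to reduce, via the regular factorization $A \xrightarrow{u} R \xrightarrow{v} B$, to Gulliksen's classical finiteness theorem applied to $v$, and then to transport the finite generation along the ring homomorphism of operators provided by Theorem \ref{commPhiPsi}. First I would record that the complete intersection hypothesis, together with flatness of $u$ and regularity of its closed fiber, forces $I = \ker v$ to be generated by a regular sequence $x_1, \dots, x_n$ (this is the standard characterization of complete intersection homomorphisms via a regular factorization, \cite{AvramovAnnals}); in particular $\widehat{I/I^2}$ is $B$-free and $\Symm_B^{\ast}(\widehat{I/I^2}) \cong B[X_1, \dots, X_n]$.

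The bulk of the work is to upgrade the hypothesis ``$\pdim_A(M) < \infty$ or $\idim_A(N) < \infty$'' to the analogous statement over $R$, which is what Gulliksen actually needs. Write $F = R/\mathfrak{m}_A R$ for the regular closed fiber of $u$ and $k_R$ for the residue field of $R$. Flatness of $u$ yields the identifications $\Tor_q^R(M, F) \cong \Tor_q^A(M, k_A)$ and, dually, $\Ext_R^q(F, N) \cong \Ext_A^q(k_A, N)$, both of which vanish for $q \gg 0$ by Auslander--Buchsbaum and Bass respectively. Feeding this into the Cartan--Eilenberg change-of-rings spectral sequences
\[ \Tor_p^F\bigl(\Tor_q^R(M, F),\, k_R\bigr) \Rightarrow \Tor_{p+q}^R(M, k_R), \qquad \Ext_F^p\bigl(k_R,\, \Ext_R^q(F, N)\bigr) \Rightarrow \Ext_R^{p+q}(k_R, N), \]
and using that $\pdim_F(k_R) = \dim F < \infty$ from regularity of $F$, both spectral sequences collapse in finitely many columns and rows. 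Hence $\Tor_i^R(M, k_R) = 0$ (resp.\ $\Ext_R^i(k_R, N) = 0$) for $i \gg 0$, and Auslander--Buchsbaum (resp.\ Bass) gives $\pdim_R(M) < \infty$ (resp.\ $\idim_R(N) < \infty$).

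With $I$ generated by a regular sequence and the $R$-homological dimension finite, Gulliksen's theorem \cite{Gulliksen} says that $\Ext_B^{\ast}(M, N)$ is finitely generated over $\Symm_B^{\ast}(\widehat{I/I^2}) \cong B[X_1, \dots, X_n]$ through the operators $\varphi$ of Definition \ref{homMehta}. Finally I would invoke Theorem \ref{commPhiPsi}: the identity $\varphi = \psi \circ \alpha$, with $\alpha \colon \widehat{I/I^2} \to \HH^1(A, B, B)$ coming from the Jacobi--Zariski sequence, says that the action of $\Symm_B^{\ast}(\widehat{I/I^2})$ on $\Ext_B^{\ast}(M, N)$ factors through the ring homomorphism $\Symm_B^{\ast}(\widehat{I/I^2}) \to \Symm_B^{\ast}(\HH^1(A, B, B))$ induced by $\alpha$; consequently any set of generators over the source is a set of generators over the target. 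The main obstacle is the middle step, where regularity of the closed fiber is what forces both spectral sequences to have bounded support.
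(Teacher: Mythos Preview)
Your proof is correct and follows essentially the same route as the paper's: reduce via Theorem \ref{commPhiPsi} to finite generation over $\Symm_B^{\ast}(\widehat{I/I^2})$, use flatness of $u$ together with the change-of-rings spectral sequences through the regular closed fiber to transfer the finiteness of $\pdim_A$ or $\idim_A$ to the corresponding finiteness over $R$, and then invoke Gulliksen. The only difference is notational (you write $F$ and $k_R$ where the paper writes $R\otimes_A k$ and $l$) and in the order of exposition.
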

\begin{proof}
	By Theorem \ref{commPhiPsi}, it suffices to show that $\Ext_B^{\ast}(M,N)$ is a $\Symm_B^{\ast}(\Ext_R^1(B,B))$-module of finite type.
	
	Since $f$ is complete intersection, $\ker(v)$ is generated by a regular sequence. Let $k$ be the residue field of $A$. Flatness of the homomorphism $u$ means that we have isomorphisms $\Tor_q^R(R \otimes_A k, M) = \Tor_q^A(k,M)$, $\Ext_R^q(R \otimes_A k, N) = \Ext_A^q(k,N) $. Since $R \otimes_A k$ is regular, from the change of rings spectral sequences, where $l$ is the residue field of $B$,
	\begin{align*}
		E^2_{pq}= \Tor^{R \otimes_A k}_p(\Tor_q^R(R \otimes_A k, M),l) \Rightarrow \Tor^R_{p+q}(M,l)
		\\E_2^{pq}= \Ext_{R \otimes_A k}^p(l,\Ext_R^q(R \otimes_A k, N)) \Rightarrow \Ext_R^{p+q}(l,N)
	\end{align*}
	we deduce that $\Tor^R_n(M,l)=0$ or $\Ext^n_R(l,N)=0$ for all $n >>0$, respectively, if $\pdim_A(M) < \infty$ or $\idim_A(N) < \infty$, whence $\pdim_R(M) < \infty$ or $\idim_R(N) < \infty$. Then the result follows from \cite[Theorem 3.1]{Gulliksen}. 
\end{proof}

\section{Compatibility of $\psi$ with Hochschild cohomology} \label{Section4}
Let $k$ be a field, $B$ a commutative $k$-algebra and $\Hochs^{\ast}(B | k) = \Ext_{B \otimes_k B}^{\ast}(B,B)$ its Hochschild cohomology. For any $B$-module $N$, we have a homomorphism
\[ s \colon \Hochs^2(B | k) \to \Ext_B^2(N,N)\]
defined in the following way (see for instance \cite{SnaSol}).

One can think of an element of $\Hochs^2(B | k)$ as the equivalence class in $\Ex_{B \otimes_k B}^2(B,B)$ of an extension of $B \otimes_k B$-modules
\[ \vartheta = (0 \to B \to T \to P \to B \to 0)\]
where $P$ is a free $B \otimes_k B$-module (Remark \ref{freeExtension}). Considering this extension as a sequence of $B$-modules via the homomorphism $i_2 \colon B \to B \otimes_k B$, $i_2(b)= 1 \otimes b$, it becomes an exact sequence of flat $B$-modules and thus, applying $ - \otimes_B N$,
\[ 0 \to B \otimes_B N = N \to T \otimes_B N \to P \otimes_B N \to B \otimes_B N = N \to 0 \]
is an exact sequence. Define $s([\vartheta])$ as the equivalence class of this extension, considered as a $B$-module extension via the homomorphism $i_1 \colon B \to B \otimes_k B$, $i_1(b)= b \otimes 1$.

On the other hand, we have a canonical homomorphism $\gamma \colon \HH^1(k,B,B) \to \Hochs^2(B | k)$ (see definition below). We have
\begin{thm} \label{commSnashallSolberg}
	The triangle
	\begin{center}
		\begin{tikzcd}
			\HH^1(k,B,B) \arrow{rr}{\gamma} \arrow[swap]{dr}{\psi} & & \Hochs^2(B | k) \arrow{dl}{s}
			\\ & \Ex_B^2(N,N) & 
		\end{tikzcd}
	\end{center}
	commutes.
\end{thm}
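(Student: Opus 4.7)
The plan is to evaluate both $\psi(\xi)$ and $s\circ\gamma(\xi)$ as 2-cocycles in $\Ext^2_B(N, N)$ computed via the bar resolution of $N$ as $B$-module (relative to $k$), and then check that they coincide. Fix an infinitesimal extension $\xi = (0 \to B \to E \xrightarrow{\pi} B \to 0)$ representing an element of $\Exal_k(B, B) = \HH^1(k, B, B)$, and choose a $k$-linear section $\sigma \colon B \to E$. The associated Hochschild $2$-cocycle is $f(a, b) = \sigma(a)\sigma(b) - \sigma(ab) \in \ker\pi = B$, and it represents $\gamma(\xi)\in\Hochs^2(B|k)$ on the bar resolution $P_\bullet$ of $B$ as $B\otimes_k B$-module.

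Since $k$ is a field, $P_\bullet\otimes_B N$ (via $i_2$) is exact and coincides with the bar resolution of $N$ as $B$-module over $k$; the map $s$ is induced by $-\otimes_B N$ at the cocycle level, so $s(\gamma(\xi))$ is represented by the cocycle $a \otimes b \otimes c \otimes w \mapsto a\,f(b, c)\,w$ (using Proposition \ref{Extex}).

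For $\psi(\xi)$, use the $E$-free presentation $0 \to K \to E\otimes_k N \to N \to 0$ (with augmentation $e \otimes n \mapsto \pi(e) n$), so that $\psi(\xi) = \partial_\ast\vartheta$ for $\vartheta = (0 \to \Tor^E_1(B, N) \to B\otimes_E K \to B\otimes_k N \to N \to 0)$, where $\partial \colon \Tor^E_1(B, N) \xrightarrow{\cong} C\otimes_E N \cong N$ is the canonical isomorphism of Definition \ref{homPsi} (here $C = B$). I would then build a comparison chain map $\phi_\bullet \colon F_\bullet \to \psi(\xi)$ from the bar resolution $F_\bullet$ of $N$ over $B$ by setting $\phi_0 = \mathrm{id}_{B\otimes_k N}$ and
\[
\phi_1(a \otimes b \otimes n) = 1 \otimes_E \bigl(\sigma(a)\sigma(b)\otimes n - \sigma(a)\otimes bn\bigr).
\]
The standard checks (image lies in $K$, compatibility with $d_1$, and $B$-linearity) all reduce to $\pi\sigma = \mathrm{id}_B$ and the identity $y\cdot e = \pi(e)\cdot y$ for $y\in C$ and $e\in E$ (itself a consequence of $C^2 = 0$). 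The 2-cocycle $\phi_2 \colon F_2 \to N$ representing $\psi(\xi)$ is then pinned down by $\phi_1 d_2 = i\circ \phi_2$ with $i \colon N \hookrightarrow B\otimes_E K$ the inclusion through $\partial^{-1}$.

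Expanding $\phi_1 d_2$ and using $\sigma(ab) = \sigma(a)\sigma(b) - f(a, b)$ together with the identity above simplifies the result to
\[
\phi_1 d_2(a \otimes b \otimes c \otimes n) = 1 \otimes_E \Bigl[\bigl(a\,f(b, c) - c\,f(a, b)\bigr)\otimes n + f(a, b)\otimes cn\Bigr],
\]
visibly an element of $\Tor^E_1(B, N)\subset B\otimes_E K$ since every left-hand factor lies in $C$. A snake-lemma unwinding of the connecting map $\partial$ then gives $\partial(1\otimes_E(c\otimes n')) = c\cdot n'$ for $c\in C$, $n'\in N$ (tracing through $C\otimes_E N = C\otimes_B N = B\otimes_B N = N$); applying this and cancelling via commutativity of $B$ yields $\phi_2(a\otimes b\otimes c\otimes n) = a\,f(b, c)\,n$, matching the cocycle for $s(\gamma(\xi))$. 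The main delicate step will be this explicit description of $\partial$, which must be traced carefully through the long exact sequence of $\xi\otimes_E N$; once that is in hand, the rest is bookkeeping with the Hochschild cocycle conditions.
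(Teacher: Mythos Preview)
Your proposal is correct and follows essentially the same route as the paper, only repackaged at the cocycle level rather than at the level of extensions. The paper represents $s\gamma[\xi]$ and $\psi[\xi]$ as explicit pushout extensions (along the Hochschild cocycle and along $\partial$, respectively) and then builds a commutative cube exhibiting a direct relation between them; you instead pull both back to cocycles on the bar resolution $F_\bullet$ of $N$ and check equality there. The core computation is identical in both versions: an explicit snake-lemma unwinding of $\partial$ (the paper's $\partial\circ q_1$ is exactly your $\partial(1\otimes_E(c\otimes n'))=c\cdot n'$) together with the identification of the Hochschild side with $a\otimes b\otimes c\otimes n\mapsto a\,f(b,c)\,n$. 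Your packaging is arguably cleaner, since the comparison map $\phi_1$ and the relation $\phi_1 d_2=i\phi_2$ replace the paper's cube diagram \eqref{diagPushouts}; the paper's version, on the other hand, keeps everything at the level of $\Ex$ so that the match with Definition~\ref{homPsi} is literally an equivalence of extensions. One small point to watch: when you verify that $\phi_1$ is $B$-linear, the difference $\phi_1(b'a\otimes b\otimes n)-b'\phi_1(a\otimes b\otimes n)$ becomes $1\otimes_E\bigl(f(b',a)\cdot(\sigma(b)\otimes n-1\otimes bn)\bigr)$, which vanishes in $B\otimes_E K$ precisely because $f(b',a)\in C=\ker\pi$; this is the same mechanism you invoke later, so make it explicit.
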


Before proceeding with the proof, we briefly make the following considerations:

\subsection*{Bar resolution} \hfill \break
Let $k$ be a field, $B$ a commutative $k$-algebra. We have a complex of $B \otimes_k B$-modules denoted $\mathcal{B}_{\ast}(B)$ and defined as
\[ \mathcal{B}_n(B) = B \otimes_k \dots \otimes_k B \mkern9mu \text{(}n+2 \text{ factors)}\]
where the $B \otimes_k B$-module structure is given by
\[ (b \otimes b') (b_0 \otimes \dots \otimes b_{n+1}) = b b_0 \otimes b_1 \otimes \dots \otimes b_n \otimes b_{n+1} b' \]
with differential $d_n \colon \mathcal{B}_n(B) \to \mathcal{B}_{n-1}(B)$ defined by
\[ d_n(b_0 \otimes \dots \otimes b_{n+1}) = \sum_{i=0}^{n} (-1)^i b_0 \otimes \dots \otimes b_i b_{i+1} \otimes \dots \otimes b_{n+1}\]

The complex $\mathcal{B}_{\ast}(B)$ has an augmentation 
\[d_0 \colon B \otimes_k B \to B, \mkern9mu d_0(b \otimes b') = b b'\]
and $\mathcal{B}_{\ast}(B)$ is a free resolution of the $ B \otimes_k B$-module $B$ (considering $B$ as a $ B \otimes_k B$-module via $d_0$). In fact, this augmented complex is nullhomotopic via the homomorphism $\alpha \mapsto 1 \otimes \alpha$.

Considering this augmented complex
\[ \cdots \to B \otimes_k B \otimes_k B \to B \otimes_k B \to B \to 0\]
as a complex of $B$-modules by restriction of scalars via the $k$-algebra homomorphism $i_2 \colon B \to B \otimes_k B, \mkern9mu i_2(b) = 1 \otimes b$, it is an exact sequence of flat $B$-modules and so, for a $B$-module $N$, by applying the functor $- \otimes_B N$, one obtains an exact sequence
\begin{align*}
	\cdots \to B \otimes_k B \otimes_k N  &\to \begin{aligned}[t] B \otimes_k N 	&\xrightarrow{\phantom{\hspace{3cm}}} N \to 0
	\\b \otimes n &\xmapsto{\phantom{\hspace{3cm}}} bn
	\end{aligned}
		\\ b_0 \otimes b_1 \otimes n &\mapsto b_0 b_1 \otimes n - b_0 \otimes b_1 n	
\end{align*}

Now, consider this exact sequence as an exact sequence of $B$-modules via the homomorphism $i_1 \colon B \to B \otimes_k B, \mkern9mu i_1(b) = b \otimes 1$. This yields that 
\[ \cdots \to B \otimes_k B \otimes_k N \to B  \otimes_k N\]
is a free resolution of the $B$-module $N$, known as bar resolution. In this section, the connecting homomorphisms of $\Tor$ long exact sequences, including the one involved in the definition of $\psi$, will be computed with bar resolutions. 

\subsection*{The homomorphism from Andr\'e-Quillen cohomology to Hochschild cohomology} \hfill \break
Following the preceding notation, we define Hochschild cohomology as
\[ \Hochs^{\ast}(B | k) = \Ext_{B \otimes_k B}^{\ast}(B,B).\]
The elements of $\Hochs^2(B |k)$ can be identified with classes of extensions (for the obvious equivalence relation)
\[ 0 \to I \hookrightarrow E \xrightarrow{\varepsilon} B \to 0\]
where $\varepsilon$ is a homomorphism of (non-necessarily commutative) $k$-algebras which is split as a homomorphism of $k$-modules, $I^2=0$ and $I \cong B$ as $B$-modules (see for instance \cite[Chapter XIV, \S 2]{CartanEilenberg}).

As a result, we have a canonical homomorphism
\[ \gamma \colon \Exal_k(B,B) \to \Hochs^2(B | k)\]
taking the class of an extension in $\Exal_k(B,B)$ to the class in $\Hochs^2(B | k)$ of that same extension.

\begin{proof}[Proof of Theorem \ref{commSnashallSolberg}]
	\hfill \break
	\emph{Step 1}. The homomorphism $\Exal_k(B,B) \xrightarrow{\gamma} \Hochs^2(B | k) \xrightarrow{s} \Ex_{B}^2(N,N)$. \hfill \break
	Let $N$ be a $B$-module and $s \colon \Hochs^2(B | k) \to \Ex_{B}^2(N,N)$ the homomorphism defined above. We are going to compute $s \circ \gamma$:
	
	Let 
	\[[\xi] = \left[ 0 \to B \xrightarrow{i} E \xrightarrow{\varepsilon} B \to 0\right] \in \Exal_k(B,B)\]
	
	Let $\sigma \colon B \to E$ be a $k$-module section of $\varepsilon$ and let $\pi \colon E \to B, \break \mkern9mu \pi(e)= i^{-1}(e - \sigma \varepsilon(e))$, so that $ \pi i = \mathrm{id}_B, \mkern9mu \varepsilon \sigma = \mathrm{id}_B$.
	
	As mentioned before, $\gamma[\xi]$ is the class in $\Hochs^2(B | k)$ of the extension
	\[ \left[0 \to B \to E \to B \to 0\right]\]
	but one can also think of $\gamma[\xi]$ as the class of a cocycle in the complex
	\[ \HHom_{B \otimes_k B}(\mathcal{B}_{\ast}(B), B)\] 
	To see this, let $f \colon B \otimes_k B \to B$ be the $k$-module homomorphism
	\[f(b_1 \otimes b_2) \eqdef i^{-1}(\sigma(b_1)\sigma(b_2) - \sigma(b_1 b_2))\]
	and let $\tilde{f}$ be the image of $f$ under the isomorphism
	\begin{align*}
		\HHom_k(B \otimes_k B, B)& \xrightarrow{\cong} \HHom_{B \otimes_k B}(B \otimes_k (B \otimes_k B) \otimes_k B, B)
		\\ b f(b_1 \otimes b_2) b'& \longleftrightarrow \tilde{f}(b \otimes b_1 \otimes b_2 \otimes b')
	\end{align*}
	
	We will show that, for the resolution $\mathcal{B}_{\ast}(B) \to B$ of the $ B \otimes_k B$-module $B$ 
	\begin{center}
		\begin{tikzcd}
			\cdots \arrow{r} & B^{\otimes 5} \arrow{r}{d_3} & B^{\otimes 4} \arrow{r}{d_2} \arrow{d}{\tilde{f}} & B^{\otimes 3} \arrow{r}{d_1} & B^{\otimes 2} \arrow{r}{d_0} & B \arrow{r} & 0
			\\ & & B& & & &
		\end{tikzcd}
	\end{center}
	$\tilde{f}$ is a cocycle, that is, $\tilde{f} d_3 = 0$. Despite this being well known, we outline the proof below, since we will need some of the computations for later.
		\small
		
		\vspace{0.3cm}
		For any $\left[0 \to B' \xrightarrow{i} E \xrightarrow{\varepsilon} B \to 0\right] \in \Exal_k(B,B')$ (here $B'=B$ as $B$-modules, but we will sometimes use $B'$ in order not to confuse the two $B$s) with a $k$-module section $\sigma$ of $\varepsilon$, one has a $k$-module isomorphism 
		\[ \alpha \colon B' \times B \to E, \mkern9mu \alpha(b',b) = i(b') + \sigma(b),\]
		with inverse $e \mapsto (\pi(e), \varepsilon(e))$. If we define in $B' \times B$ the product 
		\[(b'_1, b_1) \cdot (b'_2, b_2) \eqdef (b_1 b'_2 + b_2 b'_1 + f(b_1 \otimes b_2), b_1 b_2 )\]
		$\alpha$ preserves this product and, as a consequence, it is associative. The following particular case
		\[ \left[(0,b_1) (0, b_2) \right] (0, b_3) = (0,b_1 ) \left[(0,b_2) (0, b_3) \right]\]
		implies the equality
		\begin{align*} \label{eqAst} \tag{$\ast$}
			&i^{-1}\left[\sigma(b_1) \sigma(b_2) - \sigma(b_1 b_2)\right] b_3 - b_1 \cdot i^{-1} \left[ \sigma(b_2) \sigma(b_3) - \sigma(b_2 b_3)\right] \\ &+ i^{-1} \left[ \sigma(b_1 b_2) \sigma(b_3) - \sigma(b_1) \sigma(b_2 b_3)\right]  = 0
		\end{align*}
		and, from this equality, it follows easily that $\tilde{f}d_3(b_0 \otimes b_1 \otimes b_2 \otimes b_3 \otimes b_4)=0$.
		\normalsize
		\vspace{0.3cm}
		
	As a consequence, we have a commutative diagram with exact rows defining $W$
	\begin{center}
		\begin{tikzcd}
			0 \arrow{r} & \ker(d_1) \arrow{r} \arrow{d}{\beta} & B \otimes_k B \otimes_k B \arrow{r}{d_1} \arrow{d} &  B \otimes_k B \arrow{r}{d_0} \arrow[equal]{d} &  B \arrow{r} \arrow[equal]{d} & 0
			\\ 0 \arrow{r} & B \arrow{r} & W \arrow{r} &  B \otimes_k B \arrow{r} &  B \arrow{r} & 0
			
			\arrow[from=2-3, to=1-2, "\blacksquare"{anchor=center, pos=0.125, rotate=180}, draw=none]
		\end{tikzcd}
	\end{center}
	in which $\beta$ is the factorization of $\tilde{f}$ through $\coker(d_3)=\ker(d_1)$:
	\begin{center}
		\begin{tikzcd}
			B^{\otimes 4} \arrow[two heads]{r}{d_2} \arrow[swap]{d}{\tilde{f}} & \ker(d_1) \arrow{dl}{\beta}
			\\ B 
		\end{tikzcd}
	\end{center}
	
	Using $\eqref{eqAst}$ it is easy to see that the $B \otimes_k B$-module homomorphism $\beta$ is the restriction to $\ker(d_1)$ of the additive map $B \otimes_k B \otimes_k B \to B$ given by $b_0 \otimes b_1 \otimes b_2 \mapsto i^{-1}(\sigma(b_0) \sigma(b_1) - \sigma(b_0 b_1)) b_2$.

	The equivalence class of the above extension,
	\[ 0 \to B \to W \to B \otimes_k B \to B \to 0,\]
	in $\Ex_{B \otimes_k B}^2(B,B)$ is the one corresponding with $\gamma[\xi]$. It does not depend on the choice of $\sigma$, since if $\tilde{f}'$ is the cocycle associated with another section $\sigma'$, $\tilde{f}- \tilde{f}'$ is a coboundary. Applying $s$ to this element now yields an extension (recall that we consider the above extension as a sequence of $B$-modules via $i_2 \colon B \to B \otimes_k B$ and apply $- \otimes_B N$)
	\[ 0 \to N \to W \otimes_B N \to B \otimes_k N \to N \to 0 \] 
	which, as an extension of $B$-modules where the $B$-module structure for $W \otimes_B N$ is given by that of $W$ (obtained via $i_1 \colon B \to B \otimes_k B$), while in $B \otimes_k N$ the $B$-module structure is given by the left factor, represents the equivalence class of $s(\gamma[\xi]) $ in $\Ex_B^2(N,N)$.
	
	If $X \xrightarrow{i} Y$, $X \xrightarrow{j} Z $ are $B$-module homomorphisms and $N$ is a $B$-module, $(Y \oplus_X Z) \otimes_B N \cong (Y \otimes_B N) \oplus_{X \otimes_B N} (Z \otimes_B N)$. Then, we can alternatively think of $s(\gamma[\xi]) $ as the bottom row of the following diagram
	\begin{center}
		\begin{tikzcd}[scale cd=0.88, column sep=0.7cm]
			0 \arrow{r} & \ker(d_1) \otimes_B N \arrow{r}{\theta} \arrow{d}{v \eqdef \beta \otimes \mathrm{id}_N} & B \otimes_k B \otimes_k B \otimes_B N \arrow{r}{d_1 \otimes \mathrm{id}_N} \arrow{d} & B \otimes_k B \otimes_B N \arrow{r} \arrow[equal]{d} & B \otimes_B N \arrow{r} \arrow[equal]{d} & 0
			\\0 \arrow{r} & N = B \otimes_B N \arrow{r} & W_N \eqdef W \otimes_B N \arrow{r} & B \otimes_k N \arrow{r} & N \arrow{r} & 0
			
			\arrow[from=2-3, to=1-2, "\blacksquare"{anchor=center, pos=0.125, rotate=180}, draw=none]
		\end{tikzcd}
	\end{center}
	Note that the upper row is exact, since in the exact sequence $ 0 \to  \ker(d_1) \to B \otimes_k B \otimes_k B \to B \otimes_k B \to B \to 0$ the three (non-trivial) rightmost terms are flat, and thus $\ker(d_1)$ is flat too. Additionally, $\ker(d_1) \otimes_B N = \ker(d_1 \otimes_B \mathrm{id}_N)$.
	
	\emph{Step 2}. The homomorphism $\psi \colon \Exal_k(B,B) \to \Ex_B^2(N,N)$. \hfill \break
	The exact sequence $\cdots \to E \otimes_k E \otimes_k E \xrightarrow{d_1^E} E \otimes_k E \xrightarrow{d_0^E} E \to 0 $ gives a short exact sequence
	\[ 0 \to \im(d_1^E)=\ker(d_0^E) \to E \otimes_k E \to E \to 0\]
	and then, by applying $- \otimes_E N$,
	\[ 0 \to \ker(d_0^E) \otimes_E N \to E \otimes_k N \to N \to 0\]
	where $E \otimes_k N$ is a free $E$-module ($E$ acts on the left factor). Using this free presentation of $N$ to compute $\psi([\xi])$, one obtains $\psi([\xi])$ as the bottom row of the diagram
	\begin{center}
		\begin{tikzcd}[column sep=0.7cm]
			0 \arrow{r} & \Tor_1^E(B,N) \arrow{r}{\tilde{\partial}} \arrow{d}{\partial} & B \otimes_E\ker(d_0^E) \otimes_E N \arrow{r}{\chi} \arrow{d} & B \otimes_k N \arrow{r} \arrow[equal]{d} & N \arrow{r} \arrow[equal]{d} & 0
			\\0 \arrow{r} & N = \Tor_0^E(B, N) \arrow{r} & Y \arrow{r} & B \otimes_k N \arrow{r} & N \arrow{r} & 0
			
			\arrow[from=2-3, to=1-2, "\blacksquare"{anchor=center, pos=0.125, rotate=180}, draw=none]
		\end{tikzcd}
	\end{center}

	\emph{Step 3}. Proving that $s \circ \gamma = \psi$. \hfill \break
	We have a commutative diagram
	\begin{center}
		\begin{tikzcd}
			B \otimes_k E \otimes_k N \arrow{r}{r} \arrow{d}{\tilde{d}} & B \otimes_k B \otimes_k N \arrow{d}{d_1 \otimes_B \mathrm{id}_N}
			\\ B \otimes_k N \arrow[equal]{r} & B \otimes_k N
		\end{tikzcd}
	\end{center}
	in which $r$ sends $b \otimes e \otimes n \mapsto b \otimes \varepsilon(e) \otimes n$, and so by the commutativity of this diagram $r$ induces a map from the cycles $q_2 \colon Z(B \otimes_k E \otimes_k N) \eqdef \ker(\tilde{d}) \to \ker(d_1 \otimes_B \mathrm{id}_N) = \ker(d_1) \otimes_B N$, which is surjective because $r$ is as well.
	
	Let 
	\[ q_1 \colon Z(B \otimes_k E \otimes_k N) \to \Tor_1^E(B,N)\]
	be the homomorphism that sends cycles to their equivalence class in homology. 
	
	Since both $q_1$ and $q_2$ are surjective, the preceding pushout diagrams can be ``started'' at $Z(B \otimes_k E \otimes_k N)$, that is, we have the following cocartesian diagrams
	\begin{center}
		\begin{tikzcd}[column sep=small]
			Z(B \otimes_k E \otimes_k N) \arrow{r}{\theta \circ q_2} \arrow{d}{v \circ q_2} & B \otimes_k B \otimes_k N \arrow{d} & & Z(B \otimes_k E \otimes_k N) \arrow{r}{\tilde{\partial} \circ q_1} \arrow{d}{\partial \circ q_1} & B \otimes_E \ker(d_0^E) \otimes_E N \arrow{d}
			\\N \arrow{r} & W_N  & & N \arrow{r} & Y 
			
			\arrow[from=2-2, to=1-1, "\blacksquare"{anchor=center, pos=0.125, rotate=180}, draw=none]
			\arrow[from=2-5, to=1-4, "\blacksquare"{anchor=center, pos=0.125, rotate=180}, draw=none]
		\end{tikzcd}
	\end{center}
	
	We will show that there is a commutative diagram between both pushouts
	\begin{equation} \label{diagPushouts} \tag{CP}
		\begin{tikzcd}[scale cd=0.92]
			Z(B \otimes_k E \otimes_k N) & & B \otimes_k B \otimes_k N & 		
			\\ & Z(B \otimes_k E \otimes_k N) & & B \otimes_E \ker(d_0^E) \otimes_E N 
			\\N & & &	
			\\ & N & & 		
			\arrow[from=1-1, to=1-3, "\theta \circ q_2"] 
			\arrow[from=1-1, to=2-2, "g"]
			\arrow[from=1-1, to=3-1, "v \circ q_2"]
			\arrow[from=1-3, to=2-4, "\tau"]
			\arrow[from=2-2, to=2-4, "\tilde{\partial} \circ q_1"]
			\arrow[from=2-2, to=4-2, "\partial \circ q_1"]
			\arrow[from=3-1, to=4-2, equal]		
		\end{tikzcd}
	\end{equation}
	where $g (b \otimes e \otimes n) = b \otimes \sigma \varepsilon(e) \otimes n$ and  $\tau(b_1 \otimes b_2 \otimes n)= b_1 \otimes (\sigma(b_2) \otimes 1 - 1 \otimes \sigma(b_2)) \otimes n \in B \otimes_E \ker(d_0^E) \otimes_E N$.
	
	So as to show the commutativity of the upper square, we compute, explicitly, $\tilde{\partial} \colon \Tor_1^E(N,B) \to B \otimes_E \ker(d_0^E) \otimes_E N$.
	Take the free resolution of the $E$-module $B$
	\[ \dots \to E \otimes_k E \otimes_k B \to E \otimes_k B \mkern9mu (\to B \to 0), \]
	but write it ``reflected''
	\[ \dots \to B \otimes_k E \otimes_k E \to B \otimes_k E \mkern9mu (\to B \to 0) \]
	so that the maps are defined in the following way
	\begin{align*}
		B \otimes_k E \otimes_k E &\to B \otimes_k E  &B \otimes_k E \to B
		\\b \otimes e_2 \otimes e_1 &\mapsto b \otimes e_1 e_2 - \varepsilon(e_2) b \otimes e_1 &b \otimes e \mapsto \varepsilon(e) b
	\end{align*}
	That is, written in this way, the $E$-module structure lies in the rightmost factor.
	
	Tensoring this resolution over $E$ by the exact sequence
	\[ 0 \to \ker(d_0^E) \otimes_E N \to E \otimes_k N \to N \to 0\]
	(in which the $E$-module structure now lies on the left factor) we obtain the following diagram
	\begin{center}
		\begin{tikzcd}[scale cd=0.8, column sep=tiny]
			& \vdots \arrow{d} & \vdots \arrow{d} & \vdots \arrow{d} & 
			\\0 \arrow{r} & (B \otimes_k E \otimes_k E) \otimes_E (\ker(d_0^E) \otimes_E N) \arrow{r} \arrow{d}{d \otimes  \mathrm{id}_{\ker(d_0^E) \otimes_E N}} & (B \otimes_k E \otimes_k E) \otimes_E (E \otimes_k N) \arrow{r} \arrow{d}{d \otimes  \mathrm{id}_{E \otimes_k N}} & (B \otimes_k E \otimes_k E) \otimes_E N \arrow{r} \arrow{d}{d \otimes \mathrm{id}_N} & 0
			\\ 0 \arrow{r} & (B \otimes_k E) \otimes_E (\ker(d_0^E) \otimes_E N) \arrow{r}  \arrow[two heads]{d} & (B \otimes_k E) \otimes_E (E \otimes_k N) \arrow{r} & (B \otimes_k E) \otimes_E N \arrow{r} & 0
			\\ & B \otimes_E (\ker(d_0^E) \otimes_E N) & & & 
		\end{tikzcd}
	\end{center}
	
	Let $\sum_j b_i \otimes e_j \otimes 1 \otimes n_j$ be a cycle in the upper right term of the diagram. Take the representing element $\sum_j b_j \otimes e_j \otimes 1 \otimes 1 \otimes n_j \in (B \otimes_k E \otimes_k E) \otimes_E (E \otimes_k N)$. After applying the differential, we get
	\[ u = \sum_j b_j \otimes e_j \otimes 1 \otimes n_j - \sum_j \varepsilon(e_j) b_j \otimes 1 \otimes 1 \otimes n_j \]
	
	Since $\sum_j b_j \otimes e_j \otimes 1 \otimes n_j$ was a cycle, by applying $d \otimes \mathrm{id}_N$ we get
	\[ \sum_j b_j \otimes e_j \otimes n_j - \sum_j \varepsilon(e_j) b_j \otimes 1 \otimes n_j = 0 \]
	
	Therefore, in $ B \otimes_k N \mkern9mu (=(B \otimes_k E) \otimes_E N) $ we have
	\[ \sum_j b_j \otimes \varepsilon(e_j) n_j = \sum_j \varepsilon(e_j) b_j \otimes n_j  \]
	and thus we can rewrite $u \in (B \otimes_k E) \otimes_E (E \otimes_k N) = B \otimes_k E \otimes_k N$ as 
	\begin{align*}
		u &= \sum_j b_j \otimes e_j \otimes n_j - \sum_j \varepsilon(e_j) b_j \otimes 1 \otimes n_j 
		\\&= \sum_j b_j \otimes e_j \otimes n_j - \sum_j  b_j \otimes 1 \otimes \varepsilon(e_j) n_j
		\\&= \sum_j b_j \otimes (e_j \otimes n_j -  1 \otimes \varepsilon(e_j) n_j)
	\end{align*}
	and so, in this form, it can be seen as an element of the middle left term of the diagram $(B \otimes_k E) \otimes_E (\ker(d_0^E) \otimes_E N) = B \otimes_k \ker(d_0^E \otimes_E \mathrm{id}_N)$.
	
	As such, the image of this cycle in the 0-th homology of the left column, \break $B \otimes_E \ker(d_0^E \otimes_E \mathrm{id}_N)$, is $\sum_j \varepsilon(e_j) b_j  \otimes n_j - \sum_j  b_j \otimes \varepsilon(e_j) n_j$. This coincides with $ \tilde{\partial} \circ q_1 \circ g (\sum_j b_j \otimes e_j \otimes n_j)$, and it is easy to check that it is precisely what $\tau \circ \theta \circ q_2$ yields as well.

	In order to see the commutativity of the left square of the diagram \eqref{diagPushouts}, we compute $v \circ q_2$ and $\partial \circ  q_1$.
	
	On the one hand, $v \circ q_2 = (\beta \otimes_B \mathrm{id}_N) \circ q_2$ is given by
	\[ b \otimes e \otimes n \mkern6mu(= b \otimes e \otimes 1 \otimes n) \mapsto i^{-1}[\sigma(b) \sigma(\varepsilon(e) - \sigma(b \varepsilon(e))] \cdot 1 \otimes n =  i^{-1}[\sigma(b) \sigma(\varepsilon(e)) - \sigma(b \varepsilon(e))] \cdot n \]
	
	As to $ \partial \circ q_1$, we follow a similar process to the one used before. We consider the free resolution of the $E$-module $N$
	\[ \cdots \to E \otimes_k E \otimes_k N \to E \otimes_k N \mkern9mu (\to N \to 0)\]
	tensored by the exact sequence of $E$-modules 
	\[ 0 \to B \xrightarrow{i} E \xrightarrow{\varepsilon} B \to 0, \]
	which yields the diagram 
	\begin{equation} \label{connectingHom1} \tag{$\ast\ast$}
		\begin{tikzcd}
			& \vdots \arrow{d} & \vdots \arrow{d} & \vdots \arrow{d} & 
			\\0 \arrow{r} & B \otimes_k E \otimes_k N \arrow{r} \arrow{d}{\tilde{d}} & E \otimes_k E \otimes_k N \arrow{r} \arrow{d}{\tilde{d}} & B \otimes_k E \otimes_k N \arrow{r} \arrow{d}{\tilde{d}} & 0
			\\ 0 \arrow{r} & B \otimes_k N \arrow{r} & E \otimes_k N \arrow{r} & B \otimes_k N \arrow{r} & 0
		\end{tikzcd}
	\end{equation}
	An element from $\Tor_1^E(B,N)$ is the class of a cycle $z \in Z(B \otimes_k E \otimes_k N) \break = \ker( \tilde{d} \colon B \otimes_k E \otimes_k N \to B \otimes_k N)$.
	
	By using the diagram above, but starting already from a cycle of the form \break $ \sum_j \varepsilon(e'_j) \otimes \sigma \varepsilon(e_j) \otimes n_j \in B \otimes_k E \otimes_k N$ lifted from $\im(q_1 \circ g)$, we can describe $ \partial \circ q_1$.  Since $ \sum_j \varepsilon(e'_j) \otimes \sigma \varepsilon(e_j) \otimes n_j$ is a cycle, we have that, in $B \otimes_k N$,
	\begin{equation} \label{cycle_q1} \tag{$\ast\ast\ast$}
		\sum_j \varepsilon(e'_j) \varepsilon(e_j) \otimes n_j - \sum_j \varepsilon(e'_j) \otimes \varepsilon(e_j) n_j = 0.
	\end{equation}
	Take a representing element for the cycle, say $ \sum_j e'_j \otimes \sigma \varepsilon(e_j) \otimes n_j \in E \otimes_k E \otimes_k N$. Applying the differential now yields the following in $E \otimes_k N$
	\[  \sum_j e'_j \sigma \varepsilon(e_j) \otimes n_j - \sum_j e'_j \otimes \varepsilon(e_j) n_j\]
	
	Now, this element comes from the element 
	\[ \sum_j( \pi(e'_j \sigma \varepsilon(e_j)) \otimes n_j - \pi(e'_j) \otimes \varepsilon(e_j) n_j)\]
	in $B \otimes_k N$, since by applying $i \otimes_k \mathrm{id}_N$ we obtain
	\begin{align*}
		(i \otimes_k &\mathrm{id}_N) \left(\sum_j( \pi(e'_j \sigma \varepsilon(e_j)) \otimes n_j - \pi(e'_j) \otimes \varepsilon(e_j) n_j)\right) 
		\\&= \sum_j (e'_j \sigma \varepsilon(e_j) - \sigma \varepsilon (e'_j \sigma \varepsilon(e_j))) \otimes n_j - \sum_j (e'_j - \sigma \varepsilon(e'_j)) \otimes \varepsilon(e_j) n_j
		\\&= \sum_j (e'_j \sigma \varepsilon(e_j) \otimes n_j - \sigma \varepsilon (e'_j e_j) \otimes n_j)  - \sum_j (e'_j \otimes \varepsilon(e_j) n_j - \sigma \varepsilon(e'_j) \otimes \varepsilon(e_j) n_j) 
		\\&= \sum_j (e'_j \sigma \varepsilon(e_j) \otimes n_j - e'_j \otimes \varepsilon(e_j) n_j)  - \sum_j (\sigma \varepsilon (e'_j e_j) \otimes n_j - \sigma \varepsilon(e'_j) \otimes \varepsilon(e_j) n_j) 
		\\ &= \sum_j (e'_j \sigma \varepsilon(e_j) \otimes n_j - e'_j \otimes \varepsilon(e_j) n_j),
	\end{align*}
	owing the last equality to \eqref{cycle_q1}.
	
	It follows that the image of the chosen cycle in the 0-th homology of the left column of \eqref{connectingHom1}, $N$, is just
	\begin{align*}
		\sum_j &\left(\pi(e'_j \sigma \varepsilon(e_j)) n_j - \pi(e'_j) \varepsilon(e_j) n_j\right) 
		\\&= \sum_j \left(i^{-1}[e'_j \sigma \varepsilon(e_j) - \sigma \varepsilon(e'_j \sigma \varepsilon(e_j))] n_j - i^{-1}[e'_j - \sigma \varepsilon(e'_j)] \varepsilon(e_j) n_j  \right)
		\\&= \sum_j i^{-1}[e'_j \sigma \varepsilon(e_j) - \sigma \varepsilon(e'_j e_j) - e'_j \sigma \varepsilon(e_j) + \sigma \varepsilon(e'_j) \sigma \varepsilon(e_j)] n_j 
		\\&=\sum_j i^{-1}[\sigma \varepsilon(e'_j) \sigma \varepsilon(e_j) - \sigma \varepsilon(e'_j e_j)] n_j  
	\end{align*}
	
	We have just verified that $\partial \circ q_1  \circ g \left(\sum_j \varepsilon(e'_j) \otimes e_j \otimes n_j\right) = \sum_j i^{-1}[\sigma \varepsilon(e'_j) \sigma \varepsilon(e_j) - \sigma \varepsilon(e'_j e_j)] n_j = v \circ q_2\left(\sum_j \varepsilon(e'_j) \otimes e_j \otimes n_j\right)$, and thus there is an induced map $W_N \to Y$ for which the left square in the following diagram is commutative:
	\begin{center}
		\begin{tikzcd}
			s(\gamma[\xi]) \colon & 0 \arrow{r} & N \arrow{r} \arrow[equal]{d} & W_N \arrow{r} \arrow{d} & B \otimes_k N \arrow{r} \arrow[equal]{d} & N \arrow{r} \arrow[equal]{d} & 0
			\\ \psi([\xi]) \colon & 0 \arrow{r} & N \arrow{r} & Y \arrow{r} & B \otimes_k N \arrow{r} & N \arrow{r} & 0
		\end{tikzcd}
	\end{center}
	Commutativity of the left square is obvious, and commutativity of the central square, using the ``completed'' version of diagram \eqref{diagPushouts}, can be reduced to showing commutativity of the diagram
	\begin{center}
		\begin{tikzcd}
			B \otimes_k B \otimes_k N \arrow{r}{d_1 \otimes_B N} \arrow{d}{\tau} & B \otimes_k N
			\\ B \otimes_E \ker(d_0^E) \otimes_E N \arrow {r}{\chi} & B \otimes_k N \arrow[equal]{u}
		\end{tikzcd}
	\end{center}
	and this is easy.	
\end{proof}

\end{document}